\newtheorem{introthm}{Theorem}
\newtheorem{introprop}[introthm]{Proposition}
\newtheorem{theorem}{Theorem}[section]
\newtheorem{lemma}[theorem]{Lemma}
\newtheorem{proposition}[theorem]{Proposition}
\newtheorem{corollary}[theorem]{Corollary}
\theoremstyle{definition}
\newtheorem{notation}[theorem]{Notation}
\newtheorem{definition}[theorem]{Definition}
\newtheorem{example}[theorem]{Example}
\newtheorem{remark}[theorem]{Remark}
\theoremstyle{remark}
\numberwithin{equation}{section}
\def\cc{{\mathbb C}}
\def\zz{{\mathbb Z}}
\def\rr{{\mathbb R}}
\def\qq{{\mathbb Q}}
\def\pp{{\mathbb P}}
\newcommand{\GL}{\operatorname{GL}}
\newcommand{\Lt}{\operatorname{Lt}}
\newcommand{\PNef}{\operatorname{PNef}}
\newcommand{\NE}{\operatorname{NE}}
\newcommand{\Nef}{\operatorname{Nef}}
\newcommand{\Hom}{\operatorname{Hom}}
\newcommand{\lw}{\operatorname{lw}}
\newcommand{\cone}{\operatorname{cone}}
\newcommand{\rker}{\operatorname{rker}}
\newcommand{\im}{\operatorname{im}}
\newcommand{\lcm}{\operatorname{lcm}}
\newcommand{\codim}{\operatorname{codim}}
\begin{document}

\title{On base loci of higher fundamental 
forms of toric varieties}

\author[A.~Laface]{Antonio Laface}
\address{
Departamento de Matem\'atica,
Universidad de Concepci\'on,
Casilla 160-C,
Concepci\'on, Chile}
\email{alaface@udec.cl}

\author[L.~Ugaglia]{Luca Ugaglia}
\address{
Dipartimento di Matematica e Informatica,
Universit\`a degli studi di Palermo,
Via Archirafi 34,
90123 Palermo, Italy}
\email{luca.ugaglia@unipa.it}

\subjclass[2010]{Primary 14M25. Secondary
52B20, 53A20. }
\thanks{
The first author was partially supported 
by Proyecto FONDECYT Regular N. 119077.
The second author is member of INdAM - GNSAGA.
Both authors have been partially supported 
by project Anillo ACT 1415 PIA Conicyt.
}

\begin{abstract}
We study the base locus
of the higher fundamental forms of
a projective toric variety $X$ at a general
point. More precisely we consider 
the closure $X$ of the image of a
map $(\cc^*)^k\to \pp^n$, 
sending $t$ to the vector of Laurent 
monomials with exponents $p_0,\dots,p_n\in \zz^k$.
We prove that the $m$-th
fundamental form of such an $X$ at a general point
has non empty base locus if and only if the points
$p_i$ lie on a suitable degree-$m$ affine hypersurface.

We then restrict to the case in which the 
points $p_i$ are all the lattice points of a 
lattice polytope and we give some applications
of the above result. In particular we provide a 
classification for the second fundamental forms
on toric surfaces, and we also 
give some new examples of weighted 
$3$-dimensional projective spaces whose 
blowing up at a general point is not Mori dream.

\end{abstract}

\maketitle

\section*{Introduction}
Let $X\subseteq\mathbb P^n$
be a projective variety and let
$q\in X$ be a general point.
Denote by $\pi\,\colon\tilde X\to X$
the blowing-up of $X$ at $q$ with
exceptional divisor $E$. Given a
hyperplane section $H$ of $X$
it is an open problem to provide
necessary and sufficient conditions
on the embedding $X\to\mathbb P^n$
in order for the linear 
system $|\pi^*H-mE|$ to be {\em special},
which means that its dimension is bigger 
than the expected one.
The problem has been widely studied 
in case $m=2$, see for instance
~\cites{AH,CGG,CC} and the references therein,
but it remains open even in this case.
For higher values of $m$ there are
conjectures when $X$ is the blowing 
up of $\mathbb P^2$ (Segre-Harbourne-Gimigliano-Hirschowitz Conjecture~\cites{G,H,Hi,Se})
and $\mathbb P^3$ (Laface-Ugaglia 
Conjecture~\cites{LU,LU1})
at points in very general position.
These conjectures predict that 
a necessary condition 
for $|\pi^*H-mE|$ to be special is 
that it has positive dimensional base locus.

In this paper we investigate the above 
problem in case $X$ is the closure
of a monomial embedding 
$(\mathbb C^*)^k\to\mathbb P^n$,
so that $X$ is a not necessarily normal
toric variety. The principal tool
that we use is 
the restricted linear system
\[
 |\pi^*H-mE|_E,
\]
which is also called the {\em $m$-th 
fundamental form} of $X$ at $q$
(see for instance~\cites{GH,IR}).
The $m$-th fundamental form turns
out to be useful in two directions. On one
hand, a base point for the $m$-th fundamental
form is a base point for the system $|\pi^*H-mE|$
too. On the other hand, the dimension of 
the $m$-th form turns out to be related to the
speciality of the system $|\pi^*H-(m+1)E|$
(see Proposition~\ref{equiv}).

In order to state our results, let us
fix a $k$-dimensional lattice
$M\simeq \zz^k$ and a finite set of
points $S=\{p_0,\dots,p_n\}\subseteq M$.
It is possible to define a map 
$f\colon (\cc^*)^k\to \pp^n$
which associates to $t$ the vector of Laurent 
monomials with exponents $p_0,\dots,p_n$.
The closure of the image of the above map is
a $k$-dimensional projective toric variety 
$X(S)\subseteq \pp^n$, and we denote by 
$\mathbf 1\in X(S)$ the image of the neutral 
element of $(\cc^*)^k$. The point $\mathbf 1$ 
lies in the open torus orbit, and hence it is 
a general point of $X(S)$. 
An element $v\in N := \Hom(M,\mathbb Z)$
defines a map $\mathbb C^*\to X$ by
$t\mapsto f(t^v)$, whose derivative 
at $t=1$ is a vector of $T_{\mathbf 1}X$.
This induces a linear map 
$N\otimes_{\mathbb Z}\mathbb C\to
T_{\mathbf 1}X$ which allows us to identify 
$\mathbb P(N\otimes_{\mathbb Z}\mathbb C)$
with $\mathbb P(T_{\mathbf 1}X)\simeq
\mathbb P^{k-1}$ in our main theorem.
In~\cite{Pe} it is shown that the $m$-th fundamental
form at $\mathbf 1$ is not the complete linear system
if and only if the points $p_0,\dots,p_n$ lie on
an affine hypersurface of degree $m$.
Our main result shows that the $m$-th fundamental form at $\mathbf 1$ 
has a base point if and only if the top degree part of the 
above affine hypersurface is a pure power. More precisely we
have the following (see also Example~\ref{ex:par} and~\ref{ex:pal}
for the difference between our result and Perkinson's).

\begin{introthm}
\label{bs}
Given an integer $m\geq 2$
the following are equivalent:
\begin{enumerate}
\item
the $m$-th fundamental form 
at $\mathbf 1\in X(S)$ has a base
point $[v]\in\mathbb P(T_{\mathbf 1}X)$;
\item
the points of $S$ lie on an affine 
hypersurface of 
$M\otimes_\mathbb Z\mathbb C$ 
of equation
\[
 (v\cdot x)^m+ \text{lower degree terms}= 0.
\]
\end{enumerate}
\end{introthm}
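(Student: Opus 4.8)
The plan is to make the $m$-th fundamental form completely explicit as a linear system of degree-$m$ forms on $E\cong\pp^{k-1}$, and then to read off the two conditions as dual linear-algebra statements. First I would fix local coordinates: after translating the $p_i$ by a common vector (which only multiplies $\varphi$ by a global monomial and hence does not change the projective map), introduce exponential coordinates $t_j=e^{u_j}$, so that the pullback of a hyperplane section $\ell=\sum_i a_iz_i$ becomes $f(u)=\sum_i a_i e^{p_i\cdot u}$. Since $E=\pp(T_{\mathbf 1}X)$ is naturally identified with $\pp^{k-1}$ in the $u$-coordinates and $\Osh_{\tilde X}(\pi^*H-mE)|_E\cong\Osh_{\pp^{k-1}}(m)$, a global section of $\pi^*H-mE$ corresponds to a choice of $(a_i)$ for which $f$ vanishes to order $\geq m$ at $u=0$, and its restriction to $E$ is the leading Taylor term of $f$.

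Expanding $e^{p_i\cdot u}=\sum_{d\geq 0}(p_i\cdot u)^d/d!$ and grouping by degree, the degree-$d$ part of $f$ is $\tfrac1{d!}\sum_i a_i(p_i\cdot u)^d$. Hence the order-$m$ vanishing conditions are the linear equations $\sum_i a_i(p_i\cdot u)^d\equiv 0$ for $0\leq d\leq m-1$, equivalently $\sum_i a_i p_i^\alpha=0$ for every multi-index $\alpha$ with $|\alpha|\leq m-1$; and the $m$-th fundamental form is the linear system whose member attached to such an $(a_i)$ is the degree-$m$ form $[v]\mapsto \sum_i a_i(p_i\cdot v)^m$ on $\pp^{k-1}$. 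In particular $[v]$ is a base point precisely when the functional $a\mapsto\sum_i a_i(p_i\cdot v)^m$ vanishes on the common kernel $W$ of the functionals $a\mapsto \sum_i a_i p_i^\alpha$, $|\alpha|\leq m-1$.

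The equivalence then follows from duality. For a polynomial $g$, write $L_g$ for the functional $a\mapsto\sum_i a_i\,g(p_i)$; then $W=\bigcap_{|\alpha|\leq m-1}\ker L_{x^\alpha}$ and the base-point functional is $L_{(v\cdot x)^m}$. Since the $x^\alpha$ with $|\alpha|\leq m-1$ form a basis of the space of polynomials of degree $\leq m-1$ and $g\mapsto L_g$ is linear, the standard fact that a functional vanishing on $\bigcap_j\ker L_j$ lies in $\operatorname{span}\{L_j\}$ shows that $[v]$ is a base point if and only if $L_{(v\cdot x)^m}=L_h$ for some $h$ of degree $\leq m-1$. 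This identity says exactly that $(v\cdot p_i)^m-h(p_i)=0$ for every $i$, i.e. that the degree-$m$ polynomial $g(x)=(v\cdot x)^m-h(x)$, whose leading form is $(v\cdot x)^m$, vanishes on all of $S$ — which is condition (2).

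The main obstacle is the first paragraph: pinning down the dictionary between $H^0(\tilde X,\pi^*H-mE)$ restricted to $E$ and the explicit data $(a_i)$ with their vanishing constraints and leading forms. This requires the normal-bundle identification $\Osh_{\tilde X}(\pi^*H-mE)|_E\cong\Osh_{\pp^{k-1}}(m)$ together with the verification that, in the exponential chart, the leading term of $\ell\circ\varphi$ is indeed the $m$-th fundamental form evaluated along the tangent direction $v$; once this is secured, both implications reduce to a single application of linear duality, and the factorials and multinomial coefficients that appear are harmless nonzero scalars.
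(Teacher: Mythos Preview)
Your argument is correct and follows the same duality strategy as the paper: both identify the $m$-th fundamental form with the map $c\mapsto\sum_i c_i(p_i\cdot v)^m$ on the right kernel of the $(m-1)$-jet matrix, and then use that a functional annihilating this kernel lies in the row span, which unwinds to the existence of the polynomial $(v\cdot x)^m+\text{(lower)}$ vanishing on~$S$.

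The one difference worth noting is your choice of exponential coordinates $t_j=e^{u_j}$. In the paper's original torus coordinates the partial derivatives $\partial^\alpha\chi^{p_i}|_{\mathbf 1}$ are the falling factorials $P_\alpha(p_i)=\prod_j p_{ij}(p_{ij}-1)\cdots(p_{ij}-\alpha_j+1)$, so one must introduce the ``leading term'' matrices $\Lt(J_m)$, $\Lt(D_m)$ and check that $\Lt(\Phi_m)(v)-\Phi_m(v)$ already lies in the row span of $J_{m-1}$ before the duality step gives the clean form $(v\cdot x)^m$. Your substitution makes the Taylor coefficients equal to $(p_i\cdot u)^d/d!$ from the outset, so this leading-term bookkeeping evaporates and the argument becomes a single line of linear algebra. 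It is a genuine simplification of the presentation, though the underlying mechanism is identical.
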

We then restrict to the case of a toric variety
associated to a polytope. 
Indeed, given a full-dimensional lattice polytope
$\Delta\subseteq M\otimes_{\mathbb Z}\qq$, it is 
possible to define a polarized pair $(X,H)$, where 
$X=X(\Delta)$ is the projective toric variety
associated to the lattice points $\Delta\cap M$, 
while $H$ is a very ample divisor of $X$.  
In what follows we will denote by $\pi\, \colon \tilde X\to X$ 
the blowing up of the toric variety $X$ along the 
point $\mathbf 1$ and by $E$ the exceptional 
divisor.
A first consequence of Theorem~\ref{bs} is the
following characterisation of projective toric surfaces 
whose second fundamental form at $\mathbf 1$ is
not full dimensional (see also Definition~\ref{def:lw}):
\begin{introprop}
\label{pro:pol}
Let $\Delta\subseteq M\otimes_{\mathbb Z}\qq \simeq \qq^2$ 
be a full dimensional lattice polytope such that 
$|\Delta\cap M|\geq 6$ and let $(X,H)$ be the 
corresponding polarized pair.
Then the following are equivalent:
\begin{enumerate}
\item 
the second fundamental
form of $X$ at $\mathbf 1$ 
is not full dimensional;
\item
the linear system $|\pi^*H-3E|$ is special;
\item
$\Delta$ is either a Cayley polygon 
or it is equivalent, modulo $\GL(2,\zz)$, 
to one of the following:
\begin{center}
\footnotesize
\begin{longtable}{cccc}
Type & & Vertices 
&
\\
\hline
\\
$(i)$ 
&
$
 \begin{tikzpicture}[scale=.4]
 \tkzDefPoint(2,-1){P1}
 \tkzDefPoint(0,0){P2}
 \tkzDefPoint(-3,-1){P3}
 \tkzDefPoint(0,-2){P4}
 \tkzFillPolygon[color = black!20](P1,P2,P3,P4)
 \tkzDrawSegments[color=black](P1,P2 P2,P3 P3,P4 P4,P1)
 \foreach \x/\y in {0/-1,1/-1,0/0,0/-2,2/-1,-1/-1,-3/-1, -2/-1}
 {
  \tkzDefPoint(\x,\y){p}
  \tkzDrawPoints[fill=black,color=black,size=5](p)
 }
\end{tikzpicture}
$
&
$(a,0),\, (0,1),\, (-b,0),\, (0,-1)$,
& 
with $a \geq 1,\, b \geq 0$ and $a+b \geq 3$
\\
&&&
\\
\hline
\\
$(ii)$ 
&
$
 \begin{tikzpicture}[scale=.4]
 \tkzDefPoint(2,0){P1}
 \tkzDefPoint(1,1){P2}
 \tkzDefPoint(-3,0){P3}
 \tkzDefPoint(0,-1){P4}
 \tkzFillPolygon[color = black!20](P1,P2,P3,P4)
 \tkzDrawSegments[color=black](P1,P2 P2,P3 P3,P4 P4,P1)
 \foreach \x/\y in {0/0,1/0,1/1,0/-1,-1/0, -2/0, -3/0, 2/0}
 {
  \tkzDefPoint(\x,\y){p}
  \tkzDrawPoints[fill=black,color=black,size=5](p)
 }
\end{tikzpicture}
$
&
$(a,0),\, (0,1),\, (-b,0),\, (-1,-1)$,
& 
with $a \geq 1,\, b \geq 0$ and $a+b \geq 3$
\\
&&&
\\
\hline
\end{longtable}
\end{center}
\end{enumerate}
In particular the second fundamental form at $\mathbf 1$ 
has non empty base locus if and only if $\Delta$ is Cayley.
\end{introprop}
Going back to the problem stated 
at the beginning of the introduction,
an easy corollary of the above result is
that if the linear system $|\pi^*H-3E|$
is special, then its base locus contains a curve
(the strict transform of the closure 
of a one-parameter subgroup) intersecting
$E$.
We will show that if $m \geq 4$, this is no longer
true, i.e. there are examples 
of special linear systems of the form $|\pi^*H-mE|$ 
whose base locus does not contain such a curve
(see Example~\ref{ex:base}).

Finally, when $k \geq 2$, we make use 
of Theorem~\ref{bs}  in order to study stable 
base loci of divisors of the form $\pi^*H-mE$
on $\tilde X$.
In particular we give a sufficient condition
on $\Delta$ implying that $\pi^*H-mE$ 
is not semiample (Corollary~\ref{cor:notsem})
and as an application we provide the following 
new list of $3$-dimensional weighted projective spaces 
$\pp(a_1,\dots,a_4)$, with $a_i\leq 30$, 
whose blowing up at $\mathbf 1$ is not a
Mori dream space.

\begin{introprop}
\label{propo:nonmds}
Let $X := \mathbb P(a_1,\dots,a_4)$ 
and let $H$ be an ample divisor of degree 
${\rm lcm}(a_1,\dots,a_4)$.
If the vector of weights is in the following 
table then the divisor $\pi^* H-mE$ is 
nef but not semiample. In particular 
the blowing up of $X$ at
$\mathbf 1$ is not Mori dream.
{\footnotesize
\begin{table}[h]
\begin{tabular}{|c|c|}
\toprule
$[a_1,\dots,a_4]$ & $m$ \\
\midrule
$[ 7, 11, 13, 15 ]$ & $572$\\
$[ 7, 13, 16, 19 ]$ & $832$\\
$[ 7, 15, 19, 23 ]$ & $1140$\\
$[ 7, 17, 22, 27 ]$ & $1496$\\
$[ 7, 19, 20, 24 ]$ & $380$\\
$[ 7, 23, 25, 29 ]$ & $2300$\\
$[ 9, 10, 13, 17 ]$ & $702$\\
$[ 9, 13, 16, 23 ]$ & $1152$\\
$[ 9, 16, 19, 20 ]$ & $342$\\
$[ 9, 16, 19, 29 ]$ & $1710$\\
$[ 9, 17, 23, 28 ]$ & $2070$\\
$[ 9, 19, 22, 26 ]$ & $990$\\
$[ 9, 25, 28, 29 ]$ & $3024$\\
$[ 10, 11, 16, 19 ]$ & $480$\\
$[ 10, 11, 17, 23 ]$ & $1122$\\
$[ 10, 13, 17, 18 ]$ & $540$\\
$[ 10, 13, 21, 29 ]$ & $1638$\\
$[ 10, 17, 19, 21 ]$ & $1520$\\
$[ 10, 17, 22, 23 ]$ & $880$\\
$[ 10, 17, 24, 29 ]$ & $986$\\
$[ 10, 19, 21, 24 ]$ & $320$\\
$[ 10, 19, 23, 27 ]$ & $2300$\\
$[ 10, 19, 23, 28 ]$ & $1064$\\
$[ 10, 21, 22, 27 ]$ & $378$\\
$[ 10, 21, 23, 26 ]$ & $1196$\\
$[ 10, 21, 26, 29 ]$ & $1300$\\
$[ 10, 23, 27, 28 ]$ & $1400$\\
$[ 11, 12, 13, 17 ]$ & $816$\\
$[ 11, 13, 23, 28 ]$ & $1794$\\
$[ 11, 15, 19, 24 ]$ & $480$\\
$[ 11, 16, 25, 28 ]$ & $550$\\
\bottomrule
\end{tabular}\hspace{5mm}
\begin{tabular}{|c|c|}
\toprule
$[a_1,\dots,a_4]$ & $m$ \\
\midrule
$[ 11, 17, 25, 29 ]$ & $2550$\\
$[ 11, 18, 20, 21 ]$ & $280$\\
$[ 11, 19, 24, 26 ]$ & $1248$\\
$[ 11, 20, 21, 27 ]$ & $756$\\
$[ 11, 23, 24, 28 ]$ & $644$\\
$[ 11, 23, 25, 28 ]$ & $2800$\\
$[ 12, 13, 16, 19 ]$ & $304$\\
$[ 12, 13, 17, 22 ]$ & $663$\\
$[ 12, 17, 19, 23 ]$ & $1656$\\
$[ 12, 17, 19, 25 ]$ & $1800$\\
$[ 12, 17, 20, 23 ]$ & $460$\\
$[ 12, 17, 25, 26 ]$ & $1275$\\
$[ 12, 19, 22, 25 ]$ & $1100$\\
$[ 12, 19, 25, 28 ]$ & $700$\\
$[ 12, 23, 25, 29 ]$ & $2784$\\
$[ 12, 23, 26, 29 ]$ & $1508$\\
$[ 13, 14, 15, 22 ]$ & $616$\\
$[ 13, 14, 17, 25 ]$ & $1638$\\
$[ 13, 15, 17, 27 ]$ & $540$\\
$[ 13, 15, 24, 29 ]$ & $754$\\
$[ 13, 16, 19, 27 ]$ & $1728$\\
$[ 13, 17, 23, 29 ]$ & $2392$\\
$[ 13, 17, 24, 25 ]$ & $2550$\\
$[ 13, 18, 22, 29 ]$ & $1276$\\
$[ 13, 19, 21, 29 ]$ & $2436$\\
$[ 13, 20, 21, 29 ]$ & $2520$\\
$[ 13, 21, 28, 30 ]$ & $80$\\
$[ 14, 17, 22, 27 ]$ & $1232$\\
$[ 14, 17, 23, 24 ]$ & $1224$\\
$[ 14, 17, 24, 29 ]$ & $1392$\\
$[ 14, 19, 23, 30 ]$ & $1260$\\
\bottomrule
\end{tabular}\hspace{5mm}
\begin{tabular}{|c|c|}
\toprule
$[a_1,\dots,a_4]$ & $m$ \\
\midrule
$[ 14, 19, 27, 29 ]$ & $3192$\\
$[ 16, 17, 19, 22 ]$ & $969$\\
$[ 16, 18, 19, 29 ]$ & $1296$\\
$[ 16, 19, 20, 29 ]$ & $696$\\
$[ 16, 21, 23, 26 ]$ & $1449$\\
$[ 16, 22, 25, 27 ]$ & $1782$\\
$[ 17, 18, 20, 27 ]$ & $162$\\
$[ 17, 20, 21, 23 ]$ & $2520$\\
$[ 17, 20, 26, 27 ]$ & $1620$\\
$[ 17, 21, 22, 23 ]$ & $2772$\\
$[ 17, 21, 22, 29 ]$ & $3213$\\
$[ 17, 21, 24, 29 ]$ & $1218$\\
$[ 17, 23, 25, 26 ]$ & $3519$\\
$[ 17, 23, 25, 29 ]$ & $3450$\\
$[ 17, 23, 26, 30 ]$ & $2070$\\
$[ 17, 23, 27, 29 ]$ & $3726$\\
$[ 17, 25, 27, 29 ]$ & $4050$\\
$[ 18, 19, 21, 28 ]$ & $76$\\
$[ 18, 20, 23, 27 ]$ & $189$\\
$[ 18, 23, 26, 27 ]$ & $216$\\
$[ 18, 26, 27, 29 ]$ & $243$\\
$[ 19, 20, 22, 29 ]$ & $1740$\\
$[ 19, 22, 24, 25 ]$ & $1584$\\
$[ 19, 22, 25, 26 ]$ & $1672$\\
$[ 19, 23, 24, 25 ]$ & $3312$\\
$[ 19, 24, 27, 29 ]$ & $1296$\\
$[ 19, 24, 29, 30 ]$ & $696$\\
$[ 19, 25, 26, 27 ]$ & $3952$\\
$[ 19, 25, 28, 29 ]$ & $4275$\\
$[ 22, 25, 27, 28 ]$ & $2025$\\
$[ 23, 27, 29, 30 ]$ & $1827$\\
\bottomrule
\end{tabular}
\label{tab}
\end{table}
}
\end{introprop}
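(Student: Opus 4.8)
The plan is to translate both assertions into properties of the lattice polytope $\Delta\subseteq M\otimes\qq$ attached to $\mathbb P(a_1,\dots,a_4)$ and the very ample divisor $H$ of degree $d=\mathrm{lcm}(a_1,\dots,a_4)$, and then to verify those properties for each row of the table by an explicit (computer-assisted) computation. First I would present the pair $(X,H)=(\mathbb P(a_1,\dots,a_4),H)$ as the toric pair $X(\Delta)$, where the lattice points $\Delta\cap M$ are the exponent vectors of the degree-$d$ monomials and $M$ is the character lattice of the torus. Since $X$ has Picard rank one and $\mathbf 1$ is a general, hence smooth, point, the group $\mathrm{Pic}(\tilde X)_\qq$ is spanned by $A:=\pi^*H$ and $E$; thus $\Nef(\tilde X)$ and the pseudoeffective cone are two-dimensional, and everything reduces to locating the single nontrivial extremal ray.

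For nef-ness I would intersect $\pi^*H-mE$ with the strict transform $\Gamma_v$ of the closure of the one-parameter subgroup through $\mathbf 1$ in a primitive direction $v$. This curve is smooth at $\mathbf 1$, so $E\cdot\Gamma_v=1$, while $H\cdot\Gamma_v$ is the lattice width $w_v(\Delta)=\max_{p\in\Delta}(v\cdot p)-\min_{p\in\Delta}(v\cdot p)$, giving $(\pi^*H-mE)\cdot\Gamma_v=w_v(\Delta)-m$. Since $\pi^*H-mE$ meets the class of a line $\ell\subseteq E\cong\pp^2$ in $m>0$, nef-ness amounts to non-negativity on the opposite extremal ray of $\overline{NE}(\tilde X)$, which is spanned by $\Gamma_{v_0}$ for a minimising direction $v_0$; thus $\pi^*H-mE$ is nef if and only if $m\le w(\Delta)$, and taking $m=w(\Delta)$ places it on the boundary of $\Nef(\tilde X)$ with $(\pi^*H-mE)\cdot\Gamma_{v_0}=0$. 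For each table entry this is a finite polytope computation.

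The non-semiample part is the core, and it is here that Theorem~\ref{bs} is used. Replacing $H$ by $NH$ and $m$ by $Nm$ one has $(\pi^*H-mE)|_E=\Osh_{\pp^2}(m)$, and the image of the restriction $H^0(\tilde X,N(\pi^*H-mE))\to H^0(E,\Osh_{\pp^2}(Nm))$ is exactly the $Nm$-th fundamental form of $X(N\Delta)$ at $\mathbf 1$, the underlying point set being $N\Delta\cap M$. Theorem~\ref{bs} then shows that $[v]\in E$ lies in the base locus of $|N(\pi^*H-mE)|$ precisely when the points of $N\Delta$ lie on an affine hypersurface $(v\cdot x)^{Nm}+(\text{lower degree})=0$. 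The strategy is to take $v=v_0$ and to prove that such a hypersurface exists for every sufficiently divisible $N$; then $[v_0]$, and in fact the whole curve $\Gamma_{v_0}$, lies in the stable base locus of $\pi^*H-mE$, so this nef divisor is not semiample. Since on a Mori dream space every nef divisor is semiample, this shows that $\tilde X$ is not a Mori dream space.

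The main obstacle is the uniformity in $N$. At level $N=1$ the hypersurface usually fails to exist — consistently with $m\le w(\Delta)$, which already rules out the naive construction as a product of parallel hyperplanes — so one cannot simply produce a base point of $|\pi^*H-mE|$ itself; the base point only materialises, and then persists, in the Veronese re-embeddings $X(N\Delta)$. I would reduce the existence for all large $N$ to a leading-order dimension count: the affine space of degree-$\le Nm$ polynomials with prescribed top form $(v_0\cdot x)^{Nm}$ has dimension $\binom{Nm+2}{3}$, whereas the points of $N\Delta$ impose at most $|N\Delta\cap M|$ conditions, a number of order $\mathrm{vol}(\Delta)\,N^3$ by Ehrhart. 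Comparing leading coefficients, the hypersurface exists for all large $N$ once $m^3>6\,\mathrm{vol}(\Delta)$ and the imposed conditions are suitably independent; note that for the standard simplex (ordinary $\pp^3$) one has the borderline equality $m^3=6\,\mathrm{vol}(\Delta)$, so it is exactly the skew weighted simplices with $w(\Delta)^3$ large relative to the volume that produce non-semiample divisors. Verifying this inequality, the value $m=w(\Delta)$, and the required independence of conditions for each of the listed weight vectors is the content of the computer calculation recorded in the table.
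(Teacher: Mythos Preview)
Your argument has two genuine gaps, one in each half.

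\textbf{Nef-ness.} You assert that the extremal ray of $\overline{NE}(\tilde X)$ opposite to $\ell$ is spanned by the strict transform $\Gamma_{v_0}$ of a one-parameter subgroup, and deduce that $\pi^*H-mE$ is nef iff $m\le \lw(\Delta)$. But this identification of the extremal ray is precisely what needs to be proved. The inequality $m\le\lw(\Delta)$ only shows that $\pi^*H-mE$ is \emph{pseudonef} in the sense of Definition~\ref{def:psnef}; the containment $\Nef(\tilde X)\subseteq\PNef(\tilde X)$ can be strict. The paper closes this gap with Proposition~\ref{nef:p}: if the curve $C$ realising the width is cut out by hypersurfaces of degree strictly less than $d(\Delta)/\lw(\Delta)$, then $\tilde C$ is extremal. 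This is checked case by case using the smallest-degree generators of the toric ideal of the monomial curve.

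\textbf{Non-semiampleness.} Your dimension count cannot work. Comparing leading terms gives the condition $m^3>6\,\mathrm{vol}(\Delta)$, but $6\,\mathrm{vol}(\Delta)=H^3$, so this reads $(\pi^*H-mE)^3=H^3-m^3<0$, which is impossible for a nef divisor. In the first table entry, for instance, one has $H^3=15015^2$ and $m^3=572^3$, so $m^3<H^3$. Thus the naive count always goes the wrong way in the regime you care about, and the ``suitably independent'' clause cannot rescue it: you would need the evaluation map from polynomials of degree $\le Nm-1$ to $\mathbb C^{|N\Delta\cap M|}$ to hit a specific vector while its source has strictly smaller dimension than its target. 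The paper's route is not a dimension count but a structural construction (Corollary~\ref{cor:notsem}): under three combinatorial conditions on the pair $(\Delta,v)$, one writes down explicitly a degree-$m$ hypersurface with leading term $(v\cdot x)^m$ through all lattice points of $\Delta$, namely a paraboloid through $p_{\min}$, $p_{\max}$ and the slice $\Lambda$ at height one, times $m-2$ parallel hyperplanes. The key point is that these three conditions are preserved under dilation $\Delta\mapsto r\Delta$, so the same construction produces the required hypersurface for every multiple, giving a fixed base point of $|r(\pi^*H-mE)|$ for all $r$ via Theorem~\ref{bs}. Verifying those three conditions on $\Delta$ itself is the finite computation recorded in the table.
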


In~\cite{GKK1} and~\cite{He} there are examples of 
$3$-dimensional weighted projective spaces whose 
blowing up at $\mathbf 1$ is not Mori dream. 
We remark that there is no intersection between our 
list and the one of~\cite{He},
since we consider only the cases in which 
no weight $a_i$ belongs to the semigroup
generated by the remaining ones.
Concerning the list of~\cite{GKK1}, there is only one 
common case, namely $\pp(17,18,20,27)$
(see also Remark~\ref{rem:ex}).

The paper is structured as follows. 
In Section~\ref{pre} we first introduce 
higher fundamental forms on projective
varieties, then we recall some definitions 
and facts about projective toric varieties
and finally we specialize to the case
of toric varieties associated to a lattice polytope
$\Delta$. 
In Section~\ref{proof:bs}
we prove Theorem~\ref{bs} and we 
present a couple of related examples.
The last section deals with some applications
of Theorem~\ref{bs} to toric varieties associated
to a polytope $\Delta$.
In particular we first prove a corollary which gives a 
condition on $\Delta$ implying
that a suitable divisor on the blowing up of the toric
variety $X(\Delta)$ is not semiample. Then
we consider the dimension $2$ case, proving 
Proposition~\ref{pro:pol} and 
some related results, and finally we restrict to
weighted projective spaces, proving
Proposition~\ref{propo:nonmds}.

\subsection*{Acknowledgements}
We thank the anonymous referee whose
comments helped us to improve 
the exposition of the paper.

\section{Preliminaries}
In this section we begin by recalling the
definition of the $m$-th fundamental form of a 
projective variety, and then the definition
of the projective toric variety $X\subseteq\mathbb P^n$
associated to a set $S = \{p_0,\dots,p_n\}$ of lattice points.
Finally we restrict to the case 
in which $S$ is the set of all lattice points in a lattice
polytope $\Delta$. 

\label{pre}
\subsection{Fundamental forms}
\label{ff}
We recall the following definition (see ~\cite{IR}*{Definition 1.1}).
Let $X\subseteq\mathbb P^n$ be a projective
variety of dimension $k$, and let $H$ be a hyperplane 
section of $X$. Given a point $q\in X$ denote by
$\pi\,\colon \tilde X\to X$ the blowing-up of
$X$ at $q$, and by $E$ the exceptional 
divisor.
The {\em $m$-th fundamental form} of $X$ at $q$
is the linear system of degree $m$ homogeneous
polynomials of $\mathbb P^{k-1}$ defined by the
image of the restriction map
\begin{equation}
\label{rho}
  \rho_m\,\colon
  H^0(\tilde X,\mathcal O(\pi^*H-mE))
  \to
  H^0(E,\mathcal O(\pi^*H-mE)).
\end{equation}
Recall also that $X\subseteq
\mathbb P^n$ is 
{\em linearly normal} if
the complete linear system
$|H|$ has dimension $n$.
For example a smooth rational quartic curve of $\mathbb P^n$ 
is linearly normal only if $n=4$.

\begin{remark}
\label{rem:bs}
As we wrote in the introduction, the $m$-th fundamental form of $X$
at $q$ is related to the two divisors $\pi^*H -mE$
and $\pi^*H - (m+1)E$. Indeed, from the definition 
it follows immediately that if the $m$-th form
has a base point, then the divisor $\pi^*H -mE$ on 
$\tilde X$ has a base point too. Moreover, concerning
$\pi^*H - (m+1)E$ we have the following.
\end{remark}

\begin{proposition}
\label{equiv}
Assume that $X\subseteq\mathbb P^n$ 
is linearly normal and that
the $i$-th fundamental form of $X$ at $q$ 
is full dimensional, for $2\leq i\leq m-1$. 
Then the following are equivalent:
\begin{enumerate}
\item
the $m$-th fundamental form of $X$ at 
$q$ is not full dimensional,
i.e. $\rho_m$ is not surjective;
\item
the linear system $|\pi^*H-(m+1)E|$ does not have
the expected dimension
$n-\binom{m+2}{2}$.
\end{enumerate}
\end{proposition}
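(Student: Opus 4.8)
The plan is to analyze the dimensions of the relevant cohomology groups via the exact sequences coming from the blowing-up construction, and to bootstrap from lower fundamental forms to higher ones using the hypothesis that the $i$-th fundamental forms are full dimensional for $2 \le i \le m-1$. The key observation is that the difference between $H^0(\tilde X, \Osh(\pi^*H - jE))$ and $H^0(\tilde X, \Osh(\pi^*H - (j+1)E))$ is exactly controlled by the $j$-th fundamental form, via the restriction map $\rho_j$ of~\eqref{rho}. For each $j$ there is a short exact sequence
\[
  0 \to \Osh(\pi^*H - (j+1)E) \to \Osh(\pi^*H - jE) \to \Osh_E(\pi^*H - jE) \to 0,
\]
and since $E \simeq \mathbb P^{k-1}$ with $\Osh_E(\pi^*H - jE) \simeq \Osh_{\mathbb P^{k-1}}(j)$, the image of the associated map on global sections is precisely the $j$-th fundamental form, whose dimension I want to compare with $h^0(\mathbb P^{k-1}, \Osh(j)) = \binom{k-1+j}{k-1}$.

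First I would set up the telescoping computation: writing $h^0(j) := h^0(\tilde X, \Osh(\pi^*H - jE))$, the long exact sequence yields
\[
  h^0(j) - h^0(j+1) = \dim(\operatorname{im}\rho_j)
\]
whenever $H^1(\tilde X, \Osh(\pi^*H-(j+1)E))$ interacts suitably; more carefully, $h^0(j) - h^0(j+1) \le \binom{k-1+j}{k-1}$ with equality exactly when the $j$-th fundamental form is full dimensional. The linear normality hypothesis fixes the starting term $h^0(0) = h^0(X, \Osh(H)) = n+1$ against the naive count, and guarantees that passing to $j=1$ (the differential, always full) and then through $j = 2, \dots, m-1$ (full by hypothesis) each contributes the \emph{maximal} drop $\binom{k-1+j}{k-1}$. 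Summing these maximal drops gives the \emph{expected} value of $h^0(m)$, so that $h^0(m)$ attains its expected dimension if and only if the next drop, $h^0(m) - h^0(m+1)$, is itself maximal.

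The heart of the argument is then the identification of this last step: I would show that $|\pi^*H - (m+1)E|$ has the expected dimension precisely when the drop from $h^0(m)$ to $h^0(m+1)$ equals $\binom{k-1+m}{k-1}$, which by the exact sequence above is equivalent to the surjectivity of $\rho_m$ onto $H^0(\mathbb P^{k-1}, \Osh(m))$ — i.e.\ to the $m$-th fundamental form being full dimensional. Chaining the two equivalences (expected dimension of $|\pi^*H-(m+1)E|$ $\iff$ maximal drop at level $m$ $\iff$ $m$-th fundamental form full) gives the desired statement~(1)$\iff$(2).

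The main obstacle I anticipate is controlling the higher cohomology groups $H^1(\tilde X, \Osh(\pi^*H - (j+1)E))$ that obstruct the naive additivity of global sections. To make the telescoping rigorous I would need to argue that these $H^1$ terms either vanish or cancel consistently across the sequence, so that ``expected dimension'' really does mean the count obtained by subtracting the full fundamental-form dimensions from $n+1$; the linear normality assumption and the full-dimensionality of the intermediate fundamental forms are precisely the inputs that let one track the Euler characteristic and separate the genuinely geometric drop (measured by $\rho_j$) from spurious cohomological contributions. I expect that the cleanest route is to phrase everything in terms of the cokernel of $\rho_m$ and to define ``expected dimension'' so that the bookkeeping of intermediate full forms is absorbed into the expected count, reducing the proposition to the single statement that $\operatorname{coker}\rho_m = 0$ if and only if $|\pi^*H-(m+1)E|$ meets its expected dimension.
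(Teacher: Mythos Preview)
Your approach is essentially identical to the paper's: both telescope the drops $h^0(j)-h^0(j+1)=\dim(\operatorname{im}\rho_j)$, use linear normality to anchor $h^0(0)$, use the full-dimensionality hypothesis for $j\le m-1$ to make each intermediate drop equal to $\binom{k-1+j}{k-1}$, and then read off that $h^0(m+1)$ meets its expected value iff $\operatorname{codim}(\operatorname{im}\rho_m)=0$. Your anticipated obstacle with $H^1$ is a red herring: the identity $h^0(j)-h^0(j+1)=\dim(\operatorname{im}\rho_j)$ comes from exactness at the middle $H^0$ term alone and needs no control on $H^1$, so the paper simply writes the telescoping sum without ever mentioning higher cohomology.
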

\begin{proof}
Denote by $h_i$ the dimension of the 
vector space $H^0(\tilde X,\mathcal O(\pi^*H-iE))$.
By hypothesis 
we have $h_{i+1} = h_i - \binom{i+k-1}{k-1}$
for any $0\leq i\leq m-1$,
so that $h_m = h_0 -
\sum_{i=0}^{m-1}\binom{i+k-1}{k-1} = n + 1 -
\binom{m-1+k}{k}$, where the last equality
is due to the linear normality of $X$ and to an
elementary property of binomial coefficients.
It follows that
\[
 h_{m+1} = n + 1 - \binom{m-1+k}{k} - \dim(\im(\rho_m))
 = n + 1 - \binom{m+k}{k} + \codim(\im(\rho_m)),
\]
which proves the statement.
\end{proof}

We are interested in describing
the fundamental forms of a 
variety $X$ in parametric form
in the following sense.
Let $U$ be an open subset 
of $\mathbb C^k$ and let
\begin{equation}
\label{param}
 f\colon U\to\mathbb P^n
 \qquad
 (u_1,\dots,u_k)\mapsto [f_0 : \dots : f_{n}]
\end{equation}
be a morphism whose image has dimension 
$k$ as well. Denote by $X$ the Zariski closure
of the image
and take a smooth point $p\in U$ 
such that its image $q = f(p)\in X$ is smooth 
as well.
\begin{remark}
\label{first-iso}
In this setting we have the following isomorphism
\[
 H^0(\tilde X,\mathcal O(\pi^*H-mE))
 \to
 H^0(X,\mathcal O(H)
 \otimes\mathcal I(q)^m)
 \qquad
 \sigma\mapsto \pi_*(\sigma),
\]
whose inverse is $x\mapsto
\pi^*(x)\sigma_E^{-m}$,
where $\sigma_E\in H^0(\tilde X,E)$ is a non-zero section.
The pullback of the sheaf 
$\mathcal O_X(H)\otimes
\mathcal I(q)^m$
via $f$ is isomorphic to 
$\mathcal O_U\otimes \mathcal I(p)^m$. Since $U$ is quasiaffine the 
latter sheaf is generated by 
its global sections. Thus
we have the isomorphism
\[
 H^0(X,\mathcal O(H)\otimes
\mathcal I(q)^m)
 \to
 \langle f_0,\dots,f_n\rangle
 \cap I(p)^m
 \qquad
 x\mapsto f^*(x),
\]
where we denote by $\langle f_0,\dots,f_n\rangle$ the 
complex vector space generated by the $f_i$. 
As a consequence the domain 
of $\rho_m$ is isomorphic to 
the subspace 
of $\langle f_0,\dots,f_{n}\rangle$
consisting of elements whose partial
derivatives of order up to $m-1$ vanish 
at $p$. 
\end{remark}
Given a multi-index 
$\alpha = (\alpha_1,\dots,\alpha_k)
\in \mathbb Z^k_{\geq 0}$ let us denote by 
$\partial^\alpha g|_p$ the partial derivative 
of the function $g$, defined by $\alpha$ and evaluated 
at $p\in U$. We recall the following definition 
(see also~\cites{DP,Pe}).
\begin{definition}
\label{mjets}
The {\em matrix of $m$-jets} of $f$ at $p\in U$, 
denoted by $J_m(f)|_p$, is the vertical join 
of the matrices
\[
 D_r(f)|_p
 := 
 \left(\partial^\alpha f_i|_p\, \colon\, |\alpha| = r 
 \text{ and } 0\leq i\leq n\right),
\]
for $r=0,\dots,m$.
\end{definition}

\subsection{Toric varieties}
 \label{subs:tor}
We recall here some basic facts about 
projective toric varieties (see for 
instance~\cites{CLS,Fu,St}),
and we introduce the notion of pseudonef cones for
the blowing up of a toric variety at a general point.

In what follows $M$ will be a rank $k$ free abelian 
group, $N := {\rm Hom}(M,\mathbb Z)$ its dual,
and $S = \{p_0,\dots,p_n\}\subseteq M$ 
a finite subset whose differences generate $M$.
For any $p\in M$ denote by 
$\chi^p(u)\in\mathbb C[u_1^{\pm 1},\dots,u_k^{\pm 1}]$ 
the corresponding Laurent monomial.
The closure $X(S)$ of the image of the morphism 
\begin{equation}
 \label{eq:imm}
 f\colon (\mathbb C^*)^k
 \to\mathbb P^n
 \qquad
 u\mapsto [\chi^{p_0}(u)\ :\ \dots :\chi^{p_n}(u)],
\end{equation}
is the {\em projective toric variety} defined by $S$,
and we denote by $\mathbf 1\in X(S)$
the image of the neutral element 
$1\in (\mathbb C^*)^k$. 
The toric variety $X:=X(S)$ defined
in this way is in general non normal.
In what follows we will denote by
$\pi\,\colon \tilde X\to X$  the blowing up
of $X$ at the point $\mathbf 1$ (which lies 
in the open torus orbit) and by
$E$ the exceptional divisor of $\pi$ or,
with abuse of notation, its class.

\begin{notation}
\label{def:param}
Given an element $v\in N$, in what follows
we denote by $C_v\subseteq X$ the Zariski closure
of the image of the map 
$t\mapsto f(t^v)$,
and by $\tilde C_v\subseteq \tilde X$ 
its strict transform.
We denote by $A_i(X)$ (resp. $A_i(X)_{\mathbb Q}$)
the $i-th$ (resp. rational) Chow group of $X$. 
Let ${NE}(X)\subseteq A_1(X)_{\mathbb Q}$
be the Mori cone of $X$, let $e\in {\rm NE}(\tilde X)$
the class of a line contained
in the exceptional divisor $E$ and 
let $\Gamma_N\subseteq{\rm NE}(\tilde X)$
be the subset consisting of the classes of the curves 
$\tilde C_v\subseteq\tilde X$,
where $v\in N$.
Finally we denote by 
$\pi_*\colon A_i(\tilde X)\to A_i(X)$ the homomorphism
induced by pushforward of cycles~\cite{fu2}*{\S 1.4}.
\end{notation}

\begin{definition}
 \label{def:psnef}
The {\em pseudonef cone} of $\tilde X$ is
\[
 \PNef(\tilde X)
 :=
 {\cone}(\pi_*^{-1}(\NE(X))\cap E^{\perp},e,\Gamma_N)^*,
\] 
and a {\em pseudonef class} is a class in ${\PNef}(\tilde X)$.
\end{definition}
\begin{proposition}
We have the inclusion $\Nef(\tilde X) \subseteq \PNef(\tilde X)$.
\end{proposition}
\begin{proof}
It is enough to prove that 
the cone $\pi_*^{-1}(\NE(X))\cap E^{\perp}$
is contained in the Mori cone $\NE(\tilde X)$. 
In order to do that, observe that given a nef 
class $\tilde D\in \Nef(\tilde X)$,
we can write $\tilde D = \pi^*D-mE$, where
$D\in\Nef(X)$ and $m\geq 0$. 
Therefore, for any $\gamma$ in
$\pi_*^{-1}(\NE(X))\cap E^{\perp}$
we have
\[
\gamma\cdot \tilde D = \gamma\cdot \pi^*D
= \pi_*\gamma \cdot D \geq 0,
\]
where the first equality follows from 
the fact that $\gamma$ lies in $E^\perp$,
and the second from
the projection formula (see~\cite{laz}*{pag. 17}).
We conclude that $\gamma$ has
non negative intersection product 
with any nef class of $\tilde X$, and
the claim follows. 
\end{proof}

We remark that in general the other inclusion does not
hold, since there are cases in which a pseudonef
divisor is not nef, nor even effective, as we
are going to see in Example~\ref{ex:noteff}.
But in Proposition~\ref{nef:p} we are going
to prove that in some interesting cases we do have
the equality $\PNef(\tilde X) = \Nef(\tilde X)$.

\subsection{Toric varieties associated to a 
lattice polytope}
 \label{subs:pol}
Most of the paper will deal with the case in which 
$S = \Delta \cap M$ 
is the set  of all the lattice points of a full-dimensional
lattice polytope $\Delta\subseteq M\otimes_{\mathbb Z}\qq$,
and hence we recall here some definitions and observations.

Given a lattice polytope $\Delta\subseteq M\otimes_{\mathbb Z}\qq$,
let us consider the set $\Delta \cap M$ of its lattice points
and let us denote simply by $X(\Delta)$ the toric variety
$X(\Delta \cap M)$.
The polytope $\Delta$ defines indeed a polarized pair $(X,H)$
consisting of the projective toric variety
$X := X(\Delta)$, together with a very ample
divisor $H$ of $X$ (and in particular $X$
is linearly normal).
Let us recall the following definitions 
(see for instance~\cites{AWW,LS}).

\begin{definition}
\label{def:lw}
Given a {\em lattice direction}, i.e. a non zero primitive vector 
$v\in N$, let us denote respectively by $\min\langle\Delta,v\rangle$ and 
$\max\langle\Delta,v\rangle$ the minimum and the maximum of 
$\langle m,v\rangle$ for $m\in \Delta$. The
{\em lattice width of} $\Delta$ {\em in the direction}
$v$ can be defined as
\[
 \lw_v(\Delta) := \max\langle\Delta,v\rangle  - 
  \min\langle\Delta,v\rangle.
\]
The {\em lattice width of} $\Delta$ is defined as 
\[
 \lw(\Delta) := \min\{\lw_v(\Delta)\, :\,  v\in N\},
\]
and if $v\in N$ is such that $\lw_v(\Delta) = \lw(\Delta)$,
we say that $v$ is a {\em width direction} for $\Delta$.
The polytope $\Delta$ is 
called a {\em Cayley polytope}
if $\lw(\Delta) = 1$.
\end{definition}
\begin{remark}
\label{lwidth}
Given a lattice polytope $\Delta$,
if $v\in N$ is a width direction, then $\Delta$ 
is bounded by the two hyperplanes 
$L_{\min}(\Delta,v):=\{  \langle x,v\rangle = \min\langle\Delta,v\rangle\}$
and $L_{\max}(\Delta,v):=\{ \langle x,v\rangle = \max\langle\Delta,v\rangle\}$
respectively. Therefore, all the lattice points of $\Delta$
lie on the union of $\lw(\Delta)+1$ hyperplanes orthogonal to $v$.
In particular, $\Delta$ is a Cayley polytope iff all its lattice
points lie on two parallel hyperplanes at lattice distance $1$.
\end{remark}

\begin{remark}
\label{rem:deg}
Let $(X,H)$ be the toric polarized pair defined
by the lattice polytope $\Delta$. Then $\lw_v(\Delta)$ 
is the degree of the curve $C_v$
(see Notation~\ref{def:param}).
\end{remark}

\begin{remark}
\label{rem:spec}
Let $\Delta$ and $(X,H)$ be as before and 
let us consider an integer $m\geq 2$.
Proposition~\ref{equiv} implies that 
$m$ is the smallest degree of an affine 
hypersurface passing through all the lattice 
points of $\Delta$ if and only if
$|\pi^*H-iE|$ has the expected dimension for 
$0\leq i\leq m$, while
$|\pi^*H-(m+1)E|$ does not have the expected dimension.
This is essentially the content of~\cite{Pe}*{Proposition~1.1}, 
since $|\pi^*H-(m+1)E|$ corresponds to hyperplane 
sections of $X$ containing the $(m+1)$-th osculating
space to $X$ at $\mathbf 1$.
\end{remark}

\begin{proposition}
\label{prop:psnef}
Let $\Delta$ and $(X,H)$ be as above.
Then the following are equivalent.
\begin{enumerate}
\item
$\pi^*H - mE$ is pseudonef;
\item
$0\leq m\leq \lw(\Delta)$.
\end{enumerate}
\end{proposition}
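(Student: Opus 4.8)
The plan is to unwind Definition \ref{def:psnef} and compute the pairing of $\pi^*H - mE$ against each class generating the cone whose dual is $\PNef(\tilde X)$. By definition, $\pi^*H - mE$ is pseudonef if and only if it pairs non-negatively with (a) the pullback of each generator of the Mori cone of $X$, (b) the class $e$ of a line in $E$, and (c) the class of the strict transform of every one-parameter subgroup of $X$. Conditions (a) and (b) should be the easy ones and will deliver the bounds $0 \le m$ and the non-negativity $(\pi^*H - mE) \cdot e = m \ge 0$; more precisely, since $H$ is nef (indeed very ample) on $X$ and $\pi^*$ preserves intersection with curves not meeting the center, the pullback classes force $\pi^*H \cdot \pi^*C = H \cdot C \ge 0$ with $E \cdot \pi^* C = 0$, so (a) gives no upper bound, while $e^2 = -1$ on $\tilde X$ yields $(\pi^*H - mE)\cdot e = -mE \cdot e = m$, forcing $m \ge 0$.

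The content of the proposition is therefore in condition (c), which I expect to be the main point. Here I would invoke Remark \ref{rem:deg}: for a lattice direction $v \in N$, the closure $C_v$ of the one-parameter subgroup defined by $v$ has $H \cdot C_v = \deg C_v = \lw_v(\Delta)$. Because $\mathbf 1$ lies in the open torus orbit and $C_v$ passes through $\mathbf 1$ with multiplicity one (the one-parameter subgroup is a smooth arc through the identity), the strict transform $\tilde C_v$ satisfies $\pi^* H \cdot \tilde C_v = H \cdot C_v = \lw_v(\Delta)$ and $E \cdot \tilde C_v = 1$. Consequently
\[
 (\pi^*H - mE) \cdot \tilde C_v = \lw_v(\Delta) - m.
\]
Demanding this be non-negative for every lattice direction $v$ is exactly the condition $m \le \lw_v(\Delta)$ for all $v$, i.e. $m \le \min_v \lw_v(\Delta) = \lw(\Delta)$.

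Combining the two halves, $\pi^*H - mE$ is pseudonef precisely when $0 \le m$ (from the line $e$) and $m \le \lw(\Delta)$ (from the one-parameter subgroups), which is the asserted equivalence $0 \le m \le \lw(\Delta)$. The main obstacle I anticipate is justifying the two intersection numbers $\pi^*H \cdot \tilde C_v = \lw_v(\Delta)$ and $E \cdot \tilde C_v = 1$ cleanly: the first rests on Remark \ref{rem:deg} together with the projection formula and the fact that $C_v$ is not contracted by $\pi$, while the second amounts to checking that $C_v$ is smooth at the general point $\mathbf 1$ and meets it transversally, so that its strict transform meets $E$ in a single reduced point. I would also note that one should confirm that conditions (a) and (b), after the dualization in Definition \ref{def:psnef}, do not impose any further upper bound on $m$ beyond what the one-parameter subgroups already give, so that the binding constraints are exactly $m \ge 0$ and $m \le \lw(\Delta)$.
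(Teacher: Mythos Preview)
Your proposal is correct and follows essentially the same approach as the paper: you compute the pairing of $\pi^*H - mE$ against each of the three types of generators in Definition~\ref{def:psnef}, obtaining $m\ge 0$ from $e$, no constraint from the pulled-back Mori cone (since $H$ is very ample and $E\cdot\pi^*C=0$), and $m\le\lw_v(\Delta)$ for all $v$ from the one-parameter subgroups via Remark~\ref{rem:deg}. The only cosmetic point is that you write ``$e^2=-1$'' where you mean $E\cdot e=-1$ (these agree only when $k=2$), but your subsequent computation $(\pi^*H-mE)\cdot e = -mE\cdot e = m$ is correct regardless.
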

\begin{proof}
We prove $(1)\Rightarrow (2)$.
First of all, since $E \cdot e = -1$, 
we have that $(\pi^*H - mE)\cdot e = m \geq 0$. 
Therefore we have $(\pi^*H -mE)\cdot
\tilde{C}_v \geq 0$, for any $\tilde{C}_v$
in $\Gamma_N$ (see Notation~\ref{def:param}).
Thus by Remark~\ref{rem:deg} we deduce 
that $\lw_v(\Delta) - m \geq 0$ for any $v\in N$ and,
by taking $v$ to be a width direction
we obtain the second inequality.

We prove $(2)\Rightarrow (1)$.
By hypothesis $H$ is very ample, so that
$\pi^*H$ is nef and thus $\pi^*H - mE$ 
has non-negative intersection with 
any class in 
$E^\perp\cap{\rm NE}(\tilde X)$. 
Moreover, by the same arguments
given above, it follows that $\pi^*H - mE$
has non-negative intersection with $e$
and all the curves $\tilde{C}_v$
in $\Gamma_N$.
\end{proof}

The proposition above gives a characterisation 
of divisors $\pi^*H-mE$, for $0\leq m\leq \lw(\Delta)$.
If $m$ is bigger than $\lw(\Delta)$ we do not
have such a characterisation, but we can prove
that some implications hold. This is the content
of the following:

\begin{proposition}
\label{prop:w}
Let $(X,H)$ be the toric polarized pair defined
by the lattice polytope $\Delta$, and let $v\in N$
be a width direction for $\Delta$. 
Given an integer $m\geq 2$ and the following statements:
\begin{enumerate}
\item
\label{c1}
$m \geq \lw(\Delta) + 1$;
\item
\label{c2}
the curve $\tilde C_v$ 
is contained in the base locus of  $|\pi^*H - mE|$;
\item
\label{c3}
the $m$-th fundamental form of
$X$ at $\mathbf 1$ has a base point
at $[v] \in \pp(T_{X,\mathbf 1})$;
\item
\label{c4}
$h^1(\pi^*H - (m+1)E)  > 0$;
\end{enumerate}
the implications~
\eqref{c1}$
\Rightarrow
$\eqref{c2}$
\Rightarrow
$\eqref{c3}$
\Rightarrow
$\eqref{c4} hold.
\end{proposition}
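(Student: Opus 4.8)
The plan is to prove the three implications in order, each using tools established earlier in the excerpt.

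For $\eqref{c1}\Rightarrow\eqref{c2}$: Suppose $\lw(\Delta)\leq m-1$, and pick a lattice direction $v\in N$ achieving the minimal lattice width, so $\lw_v(\Delta)=\lw(\Delta)\leq m-1$. By Remark~\ref{rem:deg}, the closure $C\subseteq X$ of the one-parameter subgroup defined by $v$ has degree $\lw_v(\Delta)$ with respect to $H$. The curve $C$ passes through $\mathbf 1$ (the image of the identity of the torus), so its strict transform $\tilde C$ satisfies $(\pi^*H-mE)\cdot\tilde C=\lw_v(\Delta)-m<0$, since $\lw_v(\Delta)\leq m-1$. A divisor with strictly negative intersection against an irreducible curve must contain that curve in its base locus; hence $\tilde C$ is a base curve of $|\pi^*H-mE|$. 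Finally, because $C$ passes through the center $\mathbf 1$ of the blow-up, $\tilde C$ meets the exceptional divisor $E$, which gives the required property. I expect this implication to be essentially immediate from the intersection-theoretic bookkeeping already set up in Proposition~\ref{prop:psnef}.

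For $\eqref{c2}\Rightarrow\eqref{c3}$: Assume $|\pi^*H-mE|$ has a base curve $\tilde C$ meeting $E$, and let $q=\tilde C\cap E$ be a point of intersection, corresponding to a direction $[v]\in\mathbb P^{k-1}$ (the tangent direction of $C$ at $\mathbf 1$ as read off on the exceptional divisor). Every section $s\in H^0(\tilde X,\Osh(\pi^*H-mE))$ vanishes along $\tilde C$, hence in particular vanishes at $q\in E$. The $m$-th fundamental form is by definition the image of the restriction map $\rho_m$ of~\eqref{rho} into $H^0(E,\Osh(\pi^*H-mE))$, i.e.\ the system of degree-$m$ forms on $\mathbb P^{k-1}$ obtained by restricting sections to $E$. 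Since every section vanishes at the point $q\in E$ determined by $[v]$, the restricted form vanishes at $[v]$ as well, so $[v]$ is a common zero of all forms in the image; that is, $[v]$ is a base point of the $m$-th fundamental form. The main subtlety here is to argue cleanly that a base curve meeting $E$ forces every restricted section to vanish at the corresponding point of $E$, rather than merely that the fundamental form fails to be full-dimensional.

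For $\eqref{c3}\Rightarrow\eqref{c4}$: Suppose the $m$-th fundamental form has a base point. Then the restriction map $\rho_m$ is not surjective, so the image of $\rho_m$ has positive codimension in $H^0(E,\Osh(\pi^*H-mE))$. The plan is to compare the dimensions via the exact sequence associated to the inclusion $\Osh(\pi^*H-(m+1)E)\hookrightarrow\Osh(\pi^*H-mE)$, whose quotient is $\Osh_E(\pi^*H-mE)$; the connecting and restriction maps relate $h^0$ of the two line bundles and the image of $\rho_m$. Concretely, one has $h^0(\pi^*H-mE)-h^0(\pi^*H-(m+1)E)=\dim(\operatorname{im}\rho_m)$, and the failure of surjectivity of $\rho_m$ together with the expected Euler-characteristic count (of the same flavor as the binomial computation in the proof of Proposition~\ref{equiv}) forces $h^1(\pi^*H-(m+1)E)>0$. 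The hard part will be controlling the relevant cohomology so that the positive codimension of $\operatorname{im}\rho_m$ genuinely lands in $h^1$ rather than being absorbed elsewhere; invoking linear normality and the dimension count from Proposition~\ref{equiv} is the natural way to close this gap.
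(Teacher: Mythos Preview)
Your arguments for $\eqref{c1}\Rightarrow\eqref{c2}$ and $\eqref{c2}\Rightarrow\eqref{c3}$ are correct and essentially identical to the paper's.

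For $\eqref{c3}\Rightarrow\eqref{c4}$ you set up the right short exact sequence but then misjudge where the content lies. There is no ``hard part'': the long exact sequence in cohomology associated to
\[
 0 \to \mathcal O_{\tilde X}(\pi^*H-(m+1)E) \to \mathcal O_{\tilde X}(\pi^*H-mE) \to \mathcal O_E(\pi^*H-mE) \to 0
\]
continues past $\rho_m$ with a connecting map $H^0(E,\mathcal O_E(\pi^*H-mE))\to H^1(\tilde X,\pi^*H-(m+1)E)$ whose kernel is exactly $\operatorname{im}\rho_m$. Hence $\operatorname{coker}\rho_m$ injects into $H^1(\pi^*H-(m+1)E)$, and since a base point forces $\rho_m$ to be non-surjective, $h^1(\pi^*H-(m+1)E)>0$ follows immediately. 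This is precisely the paper's argument.

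Your proposed detour through linear normality and Proposition~\ref{equiv} is not only unnecessary but would not work as stated: Proposition~\ref{equiv} assumes that the $i$-th fundamental form is full-dimensional for all $2\leq i\leq m-1$, a hypothesis that is \emph{not} part of Proposition~\ref{prop:w}. So the Euler-characteristic/dimension-count route would require you to supply an extra assumption you do not have. Drop that paragraph and simply read off the conclusion from the connecting homomorphism.
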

\begin{proof}
Let us suppose that~\eqref{c1} holds. If we denote 
as before by $\tilde C_v$ the strict transform
of the rational curve $C_v$,
we have that $(\pi^*H-mE)\cdot
\tilde C_v = \lw(\Delta) - m \leq -1$, 
and~\eqref{c2} follows.

If ~\eqref{c2} holds, then
the tangent direction $[v]$ to the base curve at $\mathbf 1$
is a base point for the $m$-th fundamental
form at $\mathbf 1$.

Finally, if~\eqref{c3} holds then the restriction
map $\rho_m$ in the following exact sequence 
\[
 \xymatrix@C=12pt{
  0\ar[r] &
  H^0(\tilde X,\pi^*H-(m+1)E)\ar[r] &
  H^0(\tilde X,\pi^*H-mE)\ar[r]^-{\rho_m} &
  H^0(E,\pi^*H-mE_{|E})
 }
\]
is not surjective, which immediately implies 
~\eqref{c4}.
\end{proof}

In the next section we will see (Remark~\ref{rem:teo})
that there are some cases in which the conditions above
are indeed equivalent, but in general only the given
implications hold .

\vspace{5mm}

We conclude this section with an example
of a pseudonef divisor which is not effective.
\begin{example}
 \label{ex:noteff}
Let $\Delta$ be the lattice triangle of
vertices $(90,0),(0,117),
(0,130)$ so that $X$ is the weighted projective plane 
$\mathbb P(9,10,13)$, and $H = 1170A$, 
were $A$ is the ample generator of the 
divisor class group of $X$.
The one parameter subgroup
of smallest degree of $X$ 
is the one defined by the 
equation $x^4-yz^2=0$ and it has degree $36$.
Thus the pseudonef cone is 
generated by the classe
of the exceptional divisor $E$ and the class of 
$D := \pi^*65A-2E$, which has intersection product $0$
with the strict transform of the above one parameter subgroup.
To prove that $D$ lies outside the effective cone
it suffices to show that it
has negative intersection 
product with some nef class $C$. 
Such a $C$ is the strict transform of the 
irreducible curve defined by an element of 
a basis of the $4$-th saturated power 
of the lattice ideal~\eqref{eq:I}
(see~\cite{hkl}*{\S 5}).
More explicitly
\[
 \begin{array}{r}
2x^{14}z - 3x^{11}y^4 - 5x^{10}yz^3 + 9x^7y^5z^2 + 3x^6y^2z^5 + x^4y^9z\\[7pt]
    - 12x^3y^6z^4 + 4x^2y^3z^7 - xy^{13} - xz^{10} + 3y^{10}z^3=0.
 \end{array}
\]
To check that the curve has multiplicity $4$ at the point
$\mathbf{1} \in X$ observe that the invariant characters 
in Cox coordinates are generated by 
$u := \frac{x^4}{yz^2}$ and 
$v := \frac{z^3}{xy^3}$.
Dividing the above polynomial
by $y^{10}z^3$ we get a degree
zero polynomial which can thus
be written as a Laurent polynomial
in the torus coordinates $u,v$.
After performing the coordinate
change and multiplying by $v$
we get
\[
 2u^4v^3 - 5u^3v^3 - 3u^3v^2 + 3u^2v^3 + 9u^2v^2 - uv^4 + 4uv^3 - 
    12uv^2 + uv + 3v - 1 = 0.
\]
Translating $(1,1)$ to the origin 
one sees that the multiplicity is $4$.
By the above discussion we have
$C = \pi^*139A-4E$, so that $C^2 = \frac{139^2}{1170}-16 > 0$.
Then $C$ is nef, being the class of an irreducible 
curve with positive self-intersection,
and we conclude observing that 
$C\cdot D = \frac{65\cdot 139}{1170}-8 < 0$.
In the following picture 
we show the position of 
$D$ and $C$ with respect 
to the light cone of 
$\tilde X$, which is the 
grey region.

\begin{center}
\begin{tikzpicture}
\draw[line width=0mm,fill=gray!80!white] (2.25,1.5) -- (0,0) -- (-2.25,1.5);
\foreach \x/\y in {0/1.5,1/0,-1/1,-2/1}
 {\draw[-, thick] (0,0) -- (\x,\y); }
\foreach \x/\y in {0/0,0/1.5,1/0,-1/1,-2/1}
 { \fill[black] (\x,\y) circle (2pt); }
 \draw[-, thick,densely dotted] (0,0) -- (-2.25,1.5);
 \draw[-, thick,densely dotted] (0,0) -- (2.25,1.5);
\node[above left] at (-2,1) {\tiny $D$};
\node[above left] at (-1,1) {\tiny $C$};
\node[above right] at (1,0) {\tiny $E$};
\node[above right] at (0,1.5) {\tiny $H$};
\end{tikzpicture}
\end{center}
\end{example}

\section{Proof of Theorem~\ref{bs}}
\label{proof:bs}

In this section we are going to prove 
Theorem~\ref{bs}, by following the 
idea of Perkinson~\cite{Pe}, i.e. 
the study of suitable relations between 
left and right kernels of a slight modification
of the $m$-th jet matrix.

Let us consider as in Subsection~\ref{subs:tor}
any finite set $S = \{p_0,\dots,p_n\}\subseteq M$ 
of lattice points,
and the map $f\colon (\cc^*)^k \to \pp^n$
given by the Laurent monomials 
$\chi^p$, for $p\in S$, whose image is 
the toric variety $X=X(S)$ (see~\eqref{eq:imm}).
Given an integer $m\geq 2$, 
for simplicity of notation we will set
\[
 J_m := J_m(f)|_1
 \quad
{\rm and}
\quad
 D_r := D_r(f)|_1,
\]
for any $0\leq r\leq m$ (see Definition~\ref{mjets}).
The columns of the matrices $J_m$ and 
$D_r$ are indexed by the points $p_0,\dots,p_n$,
while the rows of $D_r$ (resp. $J_m$) 
are indexed by the partial derivatives 
$\partial^\alpha$ of order $|\alpha| = r$ 
(resp. $0\leq |\alpha| \leq m$) in $k$
variables. We fix the graded lexicographical 
order on these derivatives.
Given $\alpha=(\alpha_1,\dots,\alpha_k)$
we define the following 
polynomials of $\mathbb C[x_1,\dots,x_k]$
\[
 P_\alpha
 :=
 \prod_{i=1}^kx_i(x_i-1)\cdots (x_i-\alpha_i+1)
 \quad
 {\rm and}
 \quad
 \Lt(P_\alpha)
 :=
 \prod_{i=1}^kx_i^{\alpha_i},
\]
where the product $x_i(x_i-1)\cdots (x_i-\alpha_i+1)$
is to be intended $1$ if $\alpha_i = 0$.
Observe that the $(i+1)$-th column of the matrix 
$J_m$ is $P_\alpha(p_i)$, for $0\leq |\alpha|\leq 
m$. 
If we denote by $\Lt(J_m)$ and $\Lt(D_m)$ 
the matrices of leading terms of $J_m$ 
and $D_m$ respectively,  then by the definitions above
the $(i+1)$-th column of $\Lt(J_m)$ consists of 
$\Lt(P_\alpha)(p_i)$, for $0\leq |\alpha|\leq m$ 
(see also~\cites{Ca,DP,Pe}).

\begin{remark}
\label{lt}
The matrix $\Lt(J_m)$ is obtained from $J_m$ by
elementary row operations. In particular
the two matrices have the same row span,
the same rank,
the same right kernel, and left kernels
of the same dimension.
An element $c = (c_p\, :\, p\in S)$ in the right kernel 
of $J_m$ corresponds to the Laurent polynomial
\[
 R_c(u) := \sum_{p\in S}c_p\chi^p(u),
\]
whose derivatives at $1\in (\mathbb C^*)^k$
vanish up to order $m$. In other words,
the zero locus of $R_c$ is a hyperplane section
of $X$ which has multiplicity at least
$m+1$ at $\mathbf 1\in X$.
On the other hand, since the $(i+1)$-th 
column of $\Lt (J_m)$ consists 
of all the monomials of degree up 
to $m$ of $\mathbb C[x_1,\dots,x_k]$
evaluated at $p_i$, an element 
$b = (b_\alpha\, :\, 0\leq |\alpha|\leq m)$
in the left kernel of $\Lt (J_m)$ corresponds 
to the polynomial
\[
 L_b 
 := 
 \sum_{0\leq |\alpha|\leq m}b_\alpha\Lt (P_\alpha)
\]
of $\mathbb C[x_1,\dots,x_k]$, of degree
at most $m$, which vanishes at
all the points of $S$.
\end{remark}

\begin{example}
\label{ex:parab}
If we consider $S=\{(0,0),(1,0),(0,1),(3,1),(1,3),(6,3)\}\subseteq \zz^2$,
and we fix $m=2$, we have

\begin{align*}
J_2 & = 
\left(\phantom{
\begin{matrix} 1\\ 2 \\ 3 \\ 4 \\ 5 \\ 6 
\end{matrix}}
\right.
\hspace{-.7em}
\begin{matrix}
1 & 1 & 1 & 1 & 1 & 1\\
\hline
0 & 1 & 0 & 3 & 1 & 6\\
0 & 0 & 1 & 1 & 3 & 3\\
\hline
0 & 0 & 0 & 6 & 0 & 30\\
0 & 0 & 0 & 3 & 3 & 18\\
0 & 0 & 0 & 0 & 6 & 6
\end{matrix}
\hspace{-.6em}
\left.\phantom{\begin{matrix}1 \\ 2 \\ 3 \\ 4 \\ 5 \\ 6 \end{matrix}}\right)
\hspace{-.8em}
\begin{tabular}{l}
$\left.\lefteqn{\phantom{\begin{matrix} 1 \end{matrix}}}\right\} D_0$\\
$\left.\lefteqn{\phantom{\begin{matrix} 2 \\ 3 \end{matrix}}}\right\} D_1$\\
$\left.\lefteqn{\phantom{\begin{matrix} 4 \\ 5 \\ 6 \end{matrix}}} \right\} D_2$
\end{tabular}
&
\hspace{5mm}
\Lt(J_2) & = 
\left(\phantom{
\begin{matrix} 1\\ 2 \\ 3 \\ 4 \\ 5 \\ 6 
\end{matrix}}
\right.
\hspace{-.7em}
\begin{matrix}
1 & 1 & 1 & 1 & 1 & 1\\
\hline
0 & 1 & 0 & 3 & 1 & 6\\
0 & 0 & 1 & 1 & 3 & 3\\
\hline
0 & 1 & 0 & 9 & 1 & 36\\
0 & 0 & 0 & 3 & 3 & 18\\
0 & 0 & 1 & 1 & 9 & 9
\end{matrix}
\hspace{-.6em}
\left.\phantom{\begin{matrix}1 \\ 2 \\ 3 \\ 4 \\ 5 \\ 6 \end{matrix}}\right)
\hspace{-.8em}
\begin{tabular}{l}
$\left.\lefteqn{\phantom{\begin{matrix} 1 \end{matrix}}}\right\} \Lt(D_0)$\\
$\left.\lefteqn{\phantom{\begin{matrix} 2 \\ 3 \end{matrix}}}\right\} \Lt(D_1)$\\
$\left.\lefteqn{\phantom{\begin{matrix} 4 \\ 5 \\ 6 \end{matrix}}} \right\} \Lt(D_2)$
\end{tabular}.
\end{align*}

\vspace{3mm}
\noindent The right kernel of $J_2$ (which
coincides with the right kernel of
$\Lt(J_2)$) is generated by the
vector $c=( 10, -10,  -5,   5,   1,  -1)$, 
corresponding to the polynomial
$R_c = 10-10u_1-5u_2+5u_1^3u_2+u_1u_2^3-u_1^6u_2^3$,
vanishing at $(1,1)$ together with its derivatives 
up to order $2$. 

On the other hand, the left kernel of $\Lt(J_2)$
is generated by $b = (0,-1,-1,1,-2,1)$, corresponding
to the degree $2$ polynomial 
$L_b = -x_1-x_2+x_1^2-2x_1x_2+x_2^2$,
vanishing at all the points of $S$.

\end{example}

\begin{proof}[Proof of Theorem~\ref{bs}]
Let $\mathfrak m_\mathbf 1$ be the ideal sheaf
of the point $\mathbf 1\in X$, and let 
$T_{X,\mathbf 1}$ and $T^*_{X,\mathbf 1}$ be 
the tangent and cotangent spaces 
of $X$ at $\mathbf 1$ respectively.
We fix $w_1,\dots,w_k$ to be 
coordinates on $T_{X,\mathbf 1}$, and
we denote by ${\rker}$ 
the right kernel of a matrix. 
We then have a commutative diagram: 
\begin{equation}
\label{secd}
 \xymatrix@C=90pt{
   H^0(\tilde X,\pi^*H-mE)\ar[r]^-{\rho_m} \ar[d]^-\simeq
   & H^0(E,\pi^*H-mE)\ar[d]^-\simeq\\
  {\rker}(J_{m-1})\ar[r]^-{
 g\mapsto
  \sum_{|\alpha|=m}w^\alpha
  \frac{m!}{\alpha_1!\cdots\alpha_k!}\,
\partial^{\alpha}g|_1
}  
  & {\rm Sym}^m(T^*_{X,\mathbf 1})
 }
\end{equation}
where the left vertical arrow 
is the isomorphism described 
in Remark~\ref{first-iso}, while the 
right vertical isomorphism follows from 
the usual identification
$\mathcal O_E(-E)\simeq 
\mathfrak m_\mathbf 1/\mathfrak m_\mathbf 1^2$
and the fact that the latter 
quotient is isomorphic to $T^*_{X,\mathbf 1}$.
Let us fix the following notation:
\[
 \Phi_m(w) 
 :=
 \left(w^\alpha \frac{m!}{\alpha_1!\cdots\alpha_k!}\, :\, |\alpha| = m\right)
 \cdot D_m
 \in\mathbb C[w_1,\dots,w_k]^{\binom{m+k}{k}},
\]
so that $\Phi_m(w)$ is a vector of 
homogeneous polynomials of 
degree $m$ in $k$ variables,
with $\binom{m+k}{k}$ entries.
By~\eqref{secd}, the $m$-th fundamental form of 
$X$ at $\mathbf 1$ corresponds to the linear 
system in $\mathbb P^{k-1}$ defined 
by the following degree $m$
homogeneous polynomials of 
$\mathbb C[w_1,\dots,w_k]$
\[
 \sum_{|\alpha|=m}w^\alpha
 \,\frac{m!}{\alpha_1!\cdots\alpha_k!}\,
\partial^\alpha \Big(\sum_{p\in S}c_p\chi^{p}(u)\Big)\\
  =
 \Phi_m(w)\cdot c,
\]
as  $c$ varies in ${\rker}(J_{m-1})$.
Therefore the $m$-th fundamental form has a base
point at $[v]\in\mathbb P^{k-1}$ if and only if
$\Phi_m(v)\cdot c = 0$, for any $c\in{\rker}(J_{m-1})$
or equivalently if
\begin{equation}
\label{eq:phi}
\Phi_m(v) 
  \in {\rker}(J_{m-1})^\perp
  = 
(\text{row span}(J_{m-1})^\perp)^\perp
  = 
 \text{row span}(J_{m-1}).
\end{equation}
If we define $\Lt (\Phi_m)(w)$
as the vector of polynomials
obtained by replacing the matrix $D_m$
with $\Lt (D_m)$ in the definition
of $\Phi_m(w)$, we have that 
$ \Lt (\Phi_m)(v) - \Phi_m(v)$ 
belongs to the row span of $J_{m-1}$. 
By~\eqref{eq:phi} and Remark~\ref{lt} we deduce that 
\[
 \Lt (\Phi_m)(v) \in \text{row span}(\Lt(J_{m-1})),
\]
which gives a non trivial vector in the left kernel of 
$\Lt (J_m)$ (recall that $\Lt(J_m)$ is the vertical
join of $\Lt(J_{m-1})$ and $\Lt(D_m)$).
By the same Remark~\ref{lt}, this vector corresponds to a 
hypersurface of degree $m$ containing 
the points $p \in S$. We conclude by
observing that we can write $\Lt (\Phi_m)(v)
=
((v\cdot p_i)^m \, :\, 0\leq i\leq n)$, so that 
the defining polynomial for the hypersurface
takes the form
\[
 (v\cdot x)^m + \text{lower degree terms},
\]
which gives the statement.
\end{proof}

\begin{example}
 \label{ex:par}
Let us consider again the 
set $S\subseteq \zz^2$ of 
Example~\ref{ex:parab}. The polynomial 
$L_b$ we found before can be written as
$(x_1-x_2)^2-x_1-x_2 
=
(v\cdot x)^2 - x_1 - x_2$, where $v=(1,-1)$. 
This is the equation of a parabola
passing through the points of $S$.
\vspace{5mm}

\begin{center}
\begin{tikzpicture}[x=.6cm,y=.6cm]
 \draw[step=1.0,gray,thin] (-0.5,-0.5) grid (6.5,3.5);
 \draw[thick] (0,-1) -- (0,3.5);
 \draw[thick] (-1,0) -- (6.5,0);
 \draw[blue, smooth, domain=-1:.51] plot ({(\x+1)/(\x-1)^2}, {\x*(\x+1)/(\x-1)^2});
\draw[blue, smooth, domain=2.8:50] plot ({(\x+1)/(\x-1)^2}, {(\x^2+\x)/(\x-1)^2});
\draw[blue, smooth, domain=-35:-1.1] plot ({(\x+1)/(\x-1)^2}, {\x*(\x+1)/(\x-1)^2});

\foreach \x/\y in {0/0,0/1,1/0, 3/1, 1/3, 6/3}
 { \fill[black] (\x,\y) circle (1.5pt); }
\end{tikzpicture}
\end{center}
\vspace{5mm}
Indeed, the second fundamental form of the 
surface $X(S)\subseteq\pp^5$ at
$\mathbf 1$ is  the $1$ dimensional linear
system generated by $w_1(w_1+w_2)$ 
and $w_2(w_1+w_2)$, so that it has 
the base point $[v]=(1:-1)\in\pp^1$.

\end{example}

We are now going to present another example
of toric surface in order to stress the difference between
Perkinson's result~\cite{Pe} and Theorem~\ref{bs}.
This is the well known Togliatti surface 
(see also~\cite{To} and~\cite{Pe}*{Example~2.4}).

\begin{example}
 \label{ex:pal}
If we now consider $S=\{(0,0),(1,0),(0,1),(2,1),(1,2),(2,2)\}\subseteq \zz^2$,
and $m=2$, we have that the left kernel of $\Lt(J_2)$ is generated
by $(0,1,1,-1,1,-1)$, corresponding to the conic 
$V(x_1+x_2-x_1^2+x_1x_2-x_2^2)$ 
containing all the points of $S$.

 \vspace{5mm}

\begin{center}
\begin{tikzpicture}[x=.7cm,y=.7cm]
 \draw[step=1.0,gray,thin] (-0.5,-0.5) grid (2.5,2.5);
 \draw[thick] (0,-.5) -- (0,2.5);
 \draw[thick] (-.5,0) -- (2.5,0);

\draw[rotate around={-45:(1,1)},blue,smooth] (1,1) ellipse (16pt and 28pt);

\foreach \x/\y in {0/0,0/1,1/0, 2/1, 1/2, 2/2}
 { \fill[black] (\x,\y) circle (1.5pt); }
\end{tikzpicture}
\end{center}
\vspace{5mm}
In this case the second fundamental form
is the $1$ dimensional linear system generated by $w_1^2 - w_2^2$ and
$2w_1w_2 + w_2^2$. Indeed, according to 
Perkinson's result, the second form is not full dimensional
because the points of $S$ lie on a conic, but since the conic
is not a parabola, according to Theorem~\ref{bs} the 
form has no base point.
 
 \end{example}

\section{Applications}

From now on we are going to apply the
above results to projective toric varieties $X:=X(\Delta)$, 
associated to a full-dimensional lattice polytope 
$\Delta\subseteq M\otimes_{\mathbb Z}\qq$
(see Subsection~\ref{subs:pol}).

In particular, we first prove
a corollary of Theorem~\ref{bs},
which allows to construct some interesting non semiample
divisors on $\tilde X$, then we restrict to the 
case of toric surfaces and finally we consider weighted 
projective spaces.

\begin{remark}
 \label{rem:parallel}
Let us fix a lattice polytope $\Delta\subseteq M\otimes\qq$
and let us consider $m := \lw(\Delta) + 1$. 
Observe that if $v\in N$ is a width direction,
then all the lattice points of 
$\Delta$ lie on $m$ parallel hyperplanes
of equation $v\cdot x - \alpha = 0$,
with $\alpha\in \{\min\langle \Delta,v\rangle,\dots,
\max\langle\Delta,v\rangle\}$. The product
of these hyperplanes gives a hypersurface
whose homogeneous part of higher degree
is $(v\cdot x)^m$, so that, by Theorem~\ref{bs},
$[v]$ is a base point for the $m$-th fundamental
form of $X(\Delta)$ at $\mathbf 1$ (if the form
is not empty), and hence it is also a base point
for $\pi^*H - mE$. Moreover, a similar argument
shows that any positive multiple of $\pi^*H - mE$ 
has the same point $[v]$ in its base locus,
so that the divisor $\pi^*H - mE$ is not semiample.

We remark that the above conclusion easily 
follows either from Proposition~\ref{prop:w}
or Proposition~\ref{prop:psnef},
since $\pi^*H - mE$ has negative intersection
with the curve $\tilde{C}_v$, so that  
it is not pseudonef (and hence it is not nef nor
semiample) and in particular the tangent direction
$[v]$ lies in the stable base locus of  $\pi^*H - mE$.

On the other hand, if we take $m := \lw(\Delta)$,
by Proposition~\ref{prop:psnef} we have that
$\pi^*H - mE$ is pseudonef, but we do not
know whether it is nef (or semiample). 
Our next goal is to give a sufficient condition
on $\Delta$, based on Theorem~\ref{bs},
which guarantees that $\pi^*H - mE$ is not semiample.
\end{remark}

\begin{corollary}
\label{cor:notsem}
Let $v\in N$ be a width direction
for $\Delta$, and suppose
that  the following conditions hold:
\begin{enumerate}
\item  the hyperplanes $L_{\min}\langle\Delta,v\rangle$
and $L_{\max}\langle\Delta,v\rangle$ 
intersect $\Delta$ exactly in two vertices,
$p_{\min}$ and $p_{\max}$ respectively;
\item the linear span $\Lambda$
of the lattice points on 
 $\{  \langle x,v\rangle = 
 \min\langle\Delta,v\rangle + 1\}\cap \Delta$
has codimension at least $2$;
\item the line through the two vertices
$p_{\min}$ and $p_{\max}$ does not
intersect $\Lambda$.
\end{enumerate}
Then $\pi^*H-\lw(\Delta)E$ is not semiample.
\end{corollary}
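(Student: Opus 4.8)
The plan is to exhibit a single point of the exceptional divisor which lies in the base locus of \emph{every} positive multiple of $D:=\pi^*H-mE$, where $m:=\lw(\Delta)$. By Proposition~\ref{prop:psnef} this $D$ is pseudonef, and if I can show that $[v]\in\pp^{k-1}=\pp(T_{X,\mathbf 1})$ lies in $\mathrm{Bs}|kD|$ for all $k\ge 1$, then no multiple of $D$ is base point free and $D$ is not semiample. Here I identify the lattice direction $v\in N$ with the corresponding point of $\pp(T_{X,\mathbf 1})$ via $T_{X,\mathbf 1}\simeq N\otimes\cc$, under which the pairing $v\cdot x$ of Theorem~\ref{bs} becomes $\langle x,v\rangle$. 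Since $kD=\pi^*(kH)-kmE$ and $kH$ is the polarization attached to $k\Delta$, Remark~\ref{rem:bs} together with Theorem~\ref{bs} applied to the pair $(X(k\Delta),kH)$ reduces everything to the following: for each $k\ge 1$ produce a degree-$km$ affine hypersurface, with leading form $(v\cdot x)^{km}$, passing through all points of $k\Delta\cap M$. (The hypotheses force $\lw(\Delta)\ge 2$, so each degree $km\ge 2$ and Theorem~\ref{bs} does apply.)

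Before constructing it I record a stability property of the first slab. Normalise $\min(\Delta,v)=0$ and $p_{\min}=0$, so that $\ell:=\langle\,\cdot\,,v\rangle$ takes the integer values $0,1,\dots,km$ on $k\Delta\cap M$, the extreme slabs $\ell=0$ and $\ell=km$ being reduced to the single vertices $p_{\min}$ and $kp_{\max}$ by hypothesis~(1). I claim that the lattice points of $k\Delta$ with $\ell=1$ are exactly those of $\Delta$, so that their affine span is $\Lambda$ for every $k$. Indeed, writing $q\in k\Delta\cap M$ with $\langle q,v\rangle=1$ as $q=k\sum_i\lambda_i v_i$ for a convex combination of the vertices $v_i$ of $\Delta$ (with $v_0=p_{\min}=0$), integrality and the uniqueness of the minimal vertex force $\langle v_i,v\rangle\ge 1$ for $i\ne 0$; hence $\sum_{i\ne 0}\lambda_i\le 1/k$, and $q=\sum_{i\ne 0}(k\lambda_i)v_i$ is again a convex combination of vertices of $\Delta$, so $q\in\Delta$.

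The construction is then
\[
 F_k \;=\; \prod_{j=1}^{km}(\ell-j)\;-\;\gamma_k\, h\prod_{j=2}^{km-1}(\ell-j),
\]
where $h$ is an affine-linear form with $h|_\Lambda=0$, $h(kp_{\max})=0$ and $h(p_{\min})\ne 0$, and $\gamma_k$ is the scalar making $F_k(p_{\min})=0$. The first product has leading form $\ell^{km}=(v\cdot x)^{km}$ while the correction has degree $km-1$, so $F_k$ has the required leading form; it vanishes on the slabs $2\le\ell\le km-1$ since both summands do, on $\ell=1$ because $h|_\Lambda=0$ and $\Lambda$ contains all those lattice points by the previous paragraph, at $kp_{\max}$ because $h(kp_{\max})=0$, and at $p_{\min}$ by the choice of $\gamma_k$ (legitimate since $h(p_{\min})\ne 0$). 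It remains to produce $h$, i.e.\ to verify that $p_{\min}$ does not lie in the affine span of $\Lambda\cup\{kp_{\max}\}$ and that this span is proper. Condition~(2) gives $\dim\Lambda\le k-2$, so the span has dimension at most $k-1$ and is proper. If $p_{\min}=0$ lay in it, writing $0=\sum_i\lambda_i q_i+\mu\,kp_{\max}$ as an affine combination with $q_i\in\Lambda$ and applying $\ell$ yields $\mu=-1/(km-1)$ and $\sum_i\lambda_i=km/(km-1)$; the point $q^\ast:=\bigl(\sum_i\lambda_i\bigr)^{-1}\sum_i\lambda_i q_i\in\Lambda$ then equals $\tfrac1m p_{\max}$, which lies on the line through $p_{\min}$ and $p_{\max}$, contradicting condition~(3).

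The delicate point, and the one I expect to be the crux, is precisely the uniformity in $k$: both that the slab $\ell=1$ does not grow with $k$ (so that the single subspace $\Lambda$ controls all the $F_k$) and that the obstruction point forced by condition~(3) is the fixed point $\tfrac1m p_{\max}$, independent of $k$. Granting these, $[v]$ is a base point of the $(km)$-th fundamental form of $X(k\Delta)$ at $\mathbf 1$ for every $k$, hence of $|kD|$ by Remark~\ref{rem:bs}, so $\mathrm{Bs}|kD|\ne\emptyset$ for all $k$ and $D=\pi^*H-\lw(\Delta)E$ fails to be semiample.
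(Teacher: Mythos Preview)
Your proof is correct and follows the same overall strategy as the paper: construct, for every dilation $k\Delta$, an explicit degree-$km$ hypersurface with leading form $(v\cdot x)^{km}$ through all lattice points, then invoke Theorem~\ref{bs} and Remark~\ref{rem:bs} to conclude that $[v]$ is a persistent base point of $|kD|$.

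The only noteworthy difference is in the shape of the hypersurface. The paper factors it as a quadric paraboloid times $m-2$ parallel hyperplanes: choosing linear forms $f,g$ with $\{p_{\min}\}\cup\Lambda\subseteq V(f)$, $\{p_{\max}\}\cup\Lambda\subseteq V(g)$ and $f+g$ proportional to $\ell-1$, the polynomial is $\bigl[(f+g)^2-f(p_{\max})f-g(p_{\min})g\bigr]\cdot\prod_{i=2}^{m-1}(\ell-i)$. After normalisation this becomes $\prod_{i=0}^{m-1}(\ell-i)-m\,f\prod_{i=2}^{m-1}(\ell-i)$, which is essentially your $F_1$ with the roles of $p_{\min}$ and $p_{\max}$ exchanged (your $h$ plays the role of the paper's $g$ rather than its $f$). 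So the two constructions are minor variants of one another. Your treatment of the dilation step is somewhat more explicit than the paper's: you prove directly that $k\Delta\cap\{\ell=1\}\cap M=\Delta\cap\{\ell=1\}\cap M$ and that the obstruction point $\tfrac1m p_{\max}$ is independent of $k$, whereas the paper argues more informally that conditions (1)--(3) persist under dilation (invoking, for (3), the hyperplane separation theorem). Your observation that the hypotheses force $\lw(\Delta)\ge 2$, needed for Theorem~\ref{bs} to apply, is a detail the paper leaves implicit.
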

\begin{proof}
By translating $\Delta$ we can suppose that
$\Lambda$ passes through the origin and
(by taking $-v$ instead of $v$ if necessary)
that $\min\langle\Delta,v\rangle = - 1$ and 
$\max\langle\Delta,v\rangle = m-1$, where we set 
$m := \lw(\Delta)$ for simplicity of notation.

By (2) and (3) 
there exist two non-associated homogeneous
polynomials $f,g$ of degree one, 
such that $\{p_{\min}\}
\cup\Lambda\subseteq V(f)$ and 
$\{p_{\max}\}\cup\Lambda\subseteq V(g)$.
Moreover, by (3) and the fact that $v$ is
constant along $\Lambda$, we can 
choose $f$ and $g$ in such a way that
the hyperplane orthogonal to
$v$ and containing $\Lambda$
has equation $f+ g = 0$. Therefore the 
lattice points 
of $\Delta\cap\Lambda$ together 
with $p_{\min}$ and $p_{\max}$ lie on
the affine quadric of equation
\[
 (f+g)^2 - f(p_{\max}) f - g(p_{\min}) g = 0.
\]
We conclude that all the lattice points
of $\Delta$ lie on the union of
this quadric and the $m-2$
hyperplanes defined by 
$f+g -\alpha$, for 
$\alpha = 1, \dots, m-2$. 
This union is a degree $m$ affine
hypersurface whose
homogeneous part of maximal degree
is the power $(f+g)^m = (v\cdot x)^m$. 
We illustrate the above results
with the following two pictures.
\begin{center}
 \begin{tikzpicture}[scale=.4]
  \tkzDefPoint(0,3){P}
 \tkzDefPoint(4,0){P1}
 \tkzDefPoint(5,5){P2}
 \tkzDefPoint(4,6){P3}
\tkzDefPoint(3,.75){A}
\tkzDefPoint(3,5.25){B}
\tkzDefPoint(2,1.5){C}
\tkzDefPoint(2,4.5){D}
 \tkzFillPolygon[color = black!0](P,P1,P2,P3)
 \tkzDrawSegments[color=black](P,P1 P1,P2 P2,P3 P3,P)
 
  \foreach \x in {2,3,4}
{\draw[blue] (\x,-.4) -- (\x,6.4);}

\draw[blue, domain=-1:5.7] plot (\x, {3-(\x)/10+(\x*\x)/10});
 \foreach \x/\y in {0/3,1/3,2/2,2/3,2/4,3/1,3/2,3/3,3/4,3/5,
 4/0, 4/1, 4/2, 4/3, 4/4, 4/5, 4/6, 5/5}
 {
  \tkzDefPoint(\x,\y){p}
  \tkzDrawPoints[fill=black,color=black,size=5](p)
 }

\draw [->] (4,3) -- (5.5,3);
\draw (4.9,3) node[above] {\tiny $v$};

\draw (P) node[left] {\tiny $p_{\min}$};
\draw (P2) node[right] {\tiny $p_{\max}$};
\draw (1,3) node[above] {\tiny $\Lambda$};
\begin{scope}[xshift = 12cm,yshift=2.4cm,scale=1.7,x={(-5mm,-4mm)},z={(0mm,10mm)},y={(4mm,-3mm)}]

\draw [->] (-1/3,1,1/6) -- (-1/3,3,1/6); 
\draw (0,2,0) node [right] {\tiny $v$};

\draw (0,0,0)--(2,2,0);
\draw (0,0,0)--(-1,2,2);
\draw[dashed] (0,0,0)--(-6,4,-2);
\draw (2,2,0)--(-1,2,2);
\draw (2,2,0)--(-6,4,-2);
\draw (-1,2,2)--(-6,4,-2);

\draw[fill=black] (0,0,0) circle (.4mm)
 node[left] {\tiny $p_{\min}$};
\draw[fill=black] (-6,4,-2) circle (.4mm)
 node[right] {\tiny $p_{\max}$};
\draw[fill=black] (2,2,0) circle (.2mm);
\draw[fill=black] (-1,2,2) circle (.2mm);

\draw[fill=blue,opacity=0.4] ( 1, 1,  0) -- ( -1/2, 1,  1) -- ( -3/2,  1, -1/2) -- cycle;

\coordinate (E) at (-1,1,0);
\coordinate (F) at (0,1,0);
\coordinate (G) at (1,1,0);

\draw[fill=black] (E) circle (.4mm);
\draw[fill=black] (F) circle (.4mm);
\draw[fill=black] (G) circle (.4mm);

\draw (2,1,0) node[below] {\tiny $\Lambda$};

\draw[dashed] (-2,1,0) -- (2,1,0);
\end{scope}
 \end{tikzpicture}
\end{center}

\vspace{4mm}

By Theorem~\ref{bs}, 
the $m$-th fundamental form of $X(\Delta)$
at $\mathbf 1$ has a base point corresponding
to $[v]\in\pp^{k-1}$,
and by Remark~\ref{rem:bs} we deduce that
$\pi^*H - mE$ has a base point too.

We now claim that the hypotheses are indeed
satisfied by any positive multiple $r\Delta$.
This is immediately clear for (1).
Concerning (2), observe that the 
cone $\rr_{>0}\cdot(\Delta - p_{\min})$ at 
$p_{\min}$ coincides with the cone 
$\rr_{>0}\cdot(r\Delta - rp_{\min})$ at $rp_{\min}$.
In particular the dimension of the linear 
span of lattice points in $\{  \langle x,v\rangle = 
 \min\langle r\Delta,v\rangle + 1\}\cap r\Delta$
 is equal to the dimension of $\Lambda$.
We illustrate this in the following 
picture (the corresponding toric variety is the  
fake projective plane obtained by
quotienting $\mathbb P^2$ by the
action of $\mathbb Z/27\mathbb Z$
defined by $\varepsilon\cdot [x_0:x_1:x_2]
= [x_0:\varepsilon^{20} x_1:\varepsilon x_2]$).

\vspace{4mm}

\begin{center}
 \begin{tikzpicture}[scale=.4]
 \begin{scope}

  \tkzDefPoint(0,0){P}
 \tkzDefPoint(4,-3){P1}
 \tkzDefPoint(5,3){P2}
 
 \tkzDefPoint(2,-1.5){P3}
 \tkzDefPoint(2,1.2){P4}
 
  \tkzDefPoint(3,-2.25){P5}
 \tkzDefPoint(3,1.8){P6}

\tkzDefPoint(4,2.4){P7}

 \tkzFillPolygon[color = black!0](P,P1,P2)
 \tkzDrawSegments[color=black](P,P1 P1,P2 P2,P)
 \draw[blue, domain=-1:5.5] plot (\x, {-3*(\x)/20+3*(\x*\x)/20});
 \foreach \x in {2,3,4}
{\draw[blue] (\x,-3.4) -- (\x,3.4);}

\foreach \y in {-1,0,1}
 { \fill[black] (2,\y) circle (3pt); }

\foreach \y in {-2,-1,0,1}
 { \fill[black] (3,\y) circle (3pt); }

\foreach \y in {-3,-2,-1,0,1,2}
 { \fill[black] (4,\y) circle (3pt); }

 \fill[black] (0,0) circle (3pt);
 \fill[black] (1,0) circle (3pt);
 \fill[black] (5,3) circle (3pt);

 \node[black] at (2,-7.5) 
 {\tiny $\Delta,\,m=5$};

 \end{scope}
 
 \begin{scope}[xshift=12cm]
 \tkzDefPoint(0,0){P}
 \tkzDefPoint(8,-6){P1}
 \tkzDefPoint(10,6){P2}
 
 \tkzDefPoint(2,-1.5){P3}
 \tkzDefPoint(2,1.2){P4}
 
  \tkzDefPoint(3,-2.25){P5}
 \tkzDefPoint(3,1.8){P6}

\tkzDefPoint(4,2.4){P7}

 \tkzFillPolygon[color = black!0](P,P1,P2)
 \tkzDrawSegments[color=black](P,P1 P1,P2 P2,P)
\foreach \x in {2,3,4,5,6,7,8,9}
{\draw[blue, thin] (\x,-6.4) -- (\x,6.4);}
  \draw[blue, domain=-1:10.5] plot (\x, {-2*(\x)/30+2*(\x*\x)/30});

\foreach \y in {-1,0,1}
 { \fill[black] (2,\y) circle (3pt); }

\foreach \y in {-2,-1,0,1}
 { \fill[black] (3,\y) circle (3pt); }

\foreach \y in {-3,-2,-1,0,1,2}
 { \fill[black] (4,\y) circle (3pt); }

\foreach \y in {-3,-2,-1,0,1,2,3}
 { \fill[black] (5,\y) circle (3pt); }

\foreach \y in {-4,-3,-2,-1,0,1,2,3}
 { \fill[black] (6,\y) circle (3pt); }
\foreach \y in {-5,-4,-3,-2,-1,0,1,2,3,4}
 { \fill[black] (7,\y) circle (3pt); }
\foreach \y in {-6,-5,-4,-3,-2,-1,0,1,2,3,4}
 { \fill[black] (8,\y) circle (3pt); }
\foreach \y in {0,1,2,3,4,5}
 { \fill[black] (9,\y) circle (3pt); }

 \fill[black] (0,0) circle (3pt);
 \fill[black] (1,0) circle (3pt);
 \fill[black] (10,6) circle (3pt);
 
 \node[black] at (5,-7.5) 
 {\tiny $2\Delta,\,2m=10$};

 \end{scope}

 \end{tikzpicture}
\end{center}
Finally, by the  hyperplane separation 
theorem,
the line through $p_{\min}$
and $p_{\max}$ is separated from $\Lambda$
by a hyperplane and this property is
preserved by dilations, which proves (3).
We conclude that any positive multiple of
$\pi^*H - mE$ has the same base point,
and the statement follows.
\end{proof}

\subsection{Toric surfaces}
\label{k=m=2}
Let us focus now on the case  
$k=2$, so that $\Delta\subseteq M\otimes\qq$ 
is a lattice polygon and $X(\Delta)$ is a projective 
toric surface. We first present some nice
consequences of the previous results and
of Proposition~\ref{pro:pol}, and then we
give a proof for the latter.

\begin{remark}
 \label{rem:spe}
Let us consider a lattice polygon $\Delta$
such that $|\Delta\cap M| \geq 6$, and the
corresponding toric pair $(X,H)$. 
If the linear system $|\pi^*H-3E|$
is special, by Proposition~\ref{pro:pol}
we have that $\lw(\Delta) \leq 2$, and in particular
Proposition~\ref{prop:w} implies 
that the base locus of $|\pi^*H-3E|$ contains a curve
intersecting $E$.

We are now going to give a counterexample
showing that the above result 
is no longer true for systems of the 
form $|\pi^*H-mE|$, when $m\geq 4$.
\end{remark}

\begin{example}
\label{ex:base}
Let $k\geq 2$ be an integer and
let us consider the polygon 
$\Delta\subseteq M\otimes_{\mathbb Z} \qq$ 
with vertices 
$(0,-k-1),(k,0),(-k,0),(0,k-1)$. 
The lattice width of $\Delta$ is $2k$, 
and there exist curves of degree $2k-1$, 
but not smaller, passing through all its 
lattice points (the following 
is the picture for $k=2$).
\vspace{4mm}
\begin{center}
\begin{tikzpicture}[scale=.4]
 \tkzDefPoint(0,0){P1}
 \tkzDefPoint(2,3){P2}
 \tkzDefPoint(0,4){P3}
 \tkzDefPoint(-2,3){P4}
 \tkzFillPolygon[color = black!0](P1,P2,P3,P4)
 \tkzDrawSegments[color=black](P1,P2 P2,P3 P3,P4 P4,P1)
 \tkzDrawSegments[color=blue](P1,P3)
  \draw[color=blue] (-1.3,2) -- (1.3,2);
  \draw[color=blue] (-2,3) -- (2,3);
 \foreach \x/\y in {0/1,0/2,0/3,0/4,1/2,-1/2,1/3,-1/3,0/0,2/3,-2/3}
 {
  \tkzDefPoint(\x,\y){p}
  \tkzDrawPoints[fill=black,color=black,size=5](p)
 }

\end{tikzpicture}
\end{center}

Therefore, if we consider the corresponding 
toric pair $(X,H)$, by Remark~\ref{rem:spec} 
we have that $h^1(\pi^*H-(2k)E) > 0$. 
We now claim that the base locus of $|\pi^*H-(2k)E|_E$ 
(equivalently, the base locus of the $2k$-th fundamental 
form of $X$ at $\mathbf 1$) is empty.
Indeed, if it were not the case,
by Theorem~\ref{bs} there would exist
a curve of degree $2k$ passing through the 
lattice points of $\Delta\cap M$ and 
having an inflection point of order $2k$
at infinity. But since there are $2k+1$ lattice
points of $\Delta$ on each of the two axes,
any curve of degree $2k$ passing through
$\Delta\cap M$ must contain the factor $xy$, 
so that it can not have an inflection of maximal 
order at infinity, which proves the claim. Observe
that on one hand this implies that the system 
$|\pi^*H-(2k)E|$ is not empty (and hence 
it is special), and on the other hand 
that the base locus of $|\pi^*H-(2k)E|$ 
does not contain any curve intersecting $E$
(when $k$ is small, it is possible
to prove, by a heavier calculation, that 
the above base locus is indeed empty).

We also remark that even if the toric surfaces 
corresponding to the above polytopes are singular, 
it is possible to resolve their singularities in order 
to obtain smooth examples.
\end{example}

\begin{remark}
There exist other examples
of toric surfaces such that $|\pi^*H-mE|$
is special but it contains no curve in its 
base locus, such as the Togliatti
surface (see Example~\ref{ex:pal}). But
the toric surfaces appearing in 
Example~\ref{ex:base} have the
additional property of being linearly normal,
since they correspond to all the lattice points
of a lattice polygon (see for 
instance~\cite{CLS}*{Chapter~2}).
\end{remark}

\begin{remark}
 \label{rem:teo}
Another direct consequence of Proposition~\ref{pro:pol}
is that, when $k=m=2$ the first three conditions 
of Proposition~\ref{prop:w} are equivalent.
Moreover, if $X$ is smooth, they are also
equivalent to the fourth one, $h^1(\pi^*H-3E)  > 0$.
Indeed, the homomorphism 
$\rho_1$ defined in~\eqref{rho} is
surjective since the hyperplanes of $\mathbb P^n$ 
through $\mathbf 1$ do not have a fixed direction.
Hence $h^1(\pi^*H-2E) = 0$, so that
the hypothesis $h^1(\pi^*H-3E) > 0$
implies that the map $\rho_2$ is not surjective.
In particular the second fundamental form 
is not full dimensional. 
By Proposition~\ref{pro:pol} and the fact
that $\Delta$ is smooth, the latter must 
be a Cayley polytope.

If $m=2$ and $k \geq 3$, condition $(3)$ of 
Proposition~\ref{prop:w} implies (by Theorem~\ref{bs}) that
the lattice points of $\Delta$ lie on an affine 
paraboloid $V(l_1^2-l_2)$, with $l_1$ and 
$l_2$ linear forms. In particular 
$\Delta$ can not have any lattice 
point in its interior by the convexity
of the paraboloid, i.e. $\Delta$ is a so called 
{\em hollow polytope}.
For $k=3$ there exists a complete classification 
of hollow polytopes (see~\cites{AKW,Ra}),
and looking at the list we can again conclude 
that, under our hypotheses, it must be $\lw(\Delta) = 1$,
so that $(3) \Rightarrow  (1)$ (and hence the first
three conditions are equivalent).
For $k > 3$, we believe that the implication
still holds true, but since in this case there is no
complete classification of hollow polytopes, 
so far we were not able to prove the result.

Finally, for bigger values of $m$, the 
implication $(3) \Rightarrow  (1)$
of Proposition~\ref{prop:w}
is no longer true in general
(even in dimension $k=2$).
For instance, given any polygon 
$\Delta$ satisfying the hypotheses 
of Corollary~\ref{cor:notsem},
we have seen that the $m$-th fundamental 
form of $X(\Delta)$ has a base point,
but $\lw(\Delta)=m$. 

\end{remark}

\begin{proof}[Proof of Proposition~\ref{pro:pol}]
The equivalence of $(1)$ and $(2)$ 
follows from Proposition~\ref{equiv}.
Let us prove $(2) \Rightarrow (3)$. 
By Remark~\ref{rem:spec}, the lattice points of $\Delta$
lie on a conic, which by Lemma~\ref{3online}
below is the union of two lines, 
say $L_1$ and $L_2$. 
If they are parallel, they must be at lattice
distance $1$, so that $\Delta$ is a Cayley
polygon.
Let us consider then the case in which
$L_1$ and $L_2$ meet in a point $q$.
We set  $r_i:=|\Delta\cap L_i\cap M|$, for $i=1,2$,
and we suppose that $r_1\geq r_2$ (so that
in particular $r_1\geq 3$). 
We also assume that $L_1$ coincides with the $x$-axis
and we distinguish the following two cases.

a) $r_1\geq 4.$ 
We claim that in this case any point $p=(x_p,y_p)\in 
\Delta\cap(L_2\setminus L_1)$ 
satisfies $|y_p| \leq 1$. Let us suppose on the contrary
that $|y_p| \geq 2$ and let us consider the intersection
of the line $y=\pm1$ (depending on the sign of $y_p$)
with the triangle generated by $p$ and $\Delta\cap L_1$. 
If $|y_p| \geq 3$ or $r_1\geq 5$, the length of this segment 
is at least $2$, while if $|y_p| = 2$ and $r_1=4$, 
the length is $3/2$, but one of its endpoints is a lattice 
point, so that in both cases we 
have at least two lattice points on this segment.
This is a contradiction and proves the claim. In particular
there are at most $2$ lattice points on
$\Delta\cap(L_2\setminus L_1)$. If there is only
one point, $\Delta$ turns out to be a Cayley triangle.
If there are $2$ points, $\Delta$ is equivalent to 
a polygon of type $(i)$ or $(ii)$, depending on 
whether $q$ is a lattice point or not.

b) $r_1 = 3.$ In this case we must have
$r_2 = 3$ and the intersection point $q$ 
is not a lattice point. We are going to show that 
it not possible. First of all observe that on one of
the two half-lines determined by $q$ on $L_1$
(resp. $L_2$) there are at least $2$ lattice points, 
say $(a,0)$ and $(a+1,0)$ (resp. $p_1$ and $p_2$). 
Moreover the triangles with vertices $(a,0),\, (a+1,0)$ and 
$p_i$ for $i=1,2$, contain no lattice point but
the vertices, which implies that they both have area $1/2$.
Therefore $p_1$ and $p_2$ lie on one
of the two lines $y=\pm 1$,
contradicting the fact that $L_1$ and $L_2$ 
intersect. This concludes the proof of $(2)\Rightarrow (3)$.

In order to prove $(3) \Rightarrow (1)$
observe that if $\Delta$ is either Cayley
or of type $(i)$ or $(ii)$, its width 
satisfies the inequality $\lw(\Delta) \leq 2$ 
and hence we conclude by means of 
Proposition~\ref{prop:w}.

Finally, concerning the last assertion,
by Theorem~\ref{bs} the second fundamental 
form at $\mathbf 1$ has a base point if and 
only if the lattice points of $\Delta\cap M$ lie 
on a parabola. 
Since in this case the parabola is degenerate,
it must be the union of two parallel lines at lattice
distance $1$, which is equivalent to say that
$\Delta$ is Cayley.
\end{proof}

We conclude this subsection with the
the following lemma that we used in the proof
of Proposition~\ref{pro:pol} (we believe that its
content is well known, but we give a proof anyway
since we could not find any explicit reference to this result).
\begin{lemma}
\label{3online}
Let $\Delta\subseteq M\otimes_{\mathbb Z}\qq$ be a lattice polygon
such that $|\Delta\cap M| \geq 5$.
Then there are at least $3$ lattice points
of $\Delta$ lying on a line.
\end{lemma}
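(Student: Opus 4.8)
The plan is to argue by a parity (pigeonhole) argument on the lattice, exploiting that the midpoint of two lattice points of the same parity is again a lattice point, and that convexity forces this midpoint to lie in $\Delta$. The key observation is that the hypothesis $|\Delta\cap M|\ge 5$ is exactly one more than the number of parity classes, $|M/2M| = 2^2 = 4$; this is the numerical coincidence the proof should exploit. Concretely, since here $M\simeq\zz^2$, the quotient $M/2M$ has exactly four elements, and I would call the image of $p\in M$ in $M/2M$ its \emph{parity class}. Because $|\Delta\cap M|\ge 5 > 4$, the pigeonhole principle produces two \emph{distinct} lattice points $p,q\in\Delta\cap M$ lying in the same parity class.

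The main step is then immediate. Since $p$ and $q$ have the same parity, $p+q\in 2M$, so the midpoint $r:=\tfrac12(p+q)$ is itself a lattice point, $r\in M$. As $r$ is a convex combination of $p$ and $q$ and $\Delta$ is convex, we get $r\in\Delta$, hence $r\in\Delta\cap M$. Moreover $r\neq p$ and $r\neq q$ (otherwise $p=q$), and by construction the three points $p$, $r$, $q$ are collinear. This exhibits three distinct lattice points of $\Delta$ on a line, which is exactly the assertion.

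I do not expect any genuine obstacle here: the only conceptual step is to set up the parity pigeonhole, after which convexity does all the work, and the single thing to verify is that the produced midpoint is a genuine \emph{third} point, which is guaranteed by $p\neq q$. It is worth recording that the bound is sharp, which both checks the argument and explains the constant $5$: the unit square with vertices $(0,0),(1,0),(0,1),(1,1)$ has four lattice points, one in each of the four parity classes, and no three of them are collinear. Thus the hypothesis cannot be weakened to $|\Delta\cap M|\ge 4$, and the relevant quantity controlling the statement is really $|M/2M|+1$.
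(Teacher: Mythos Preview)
Your argument is correct and considerably slicker than the paper's. The paper normalizes coordinates so that two adjacent vertices are $(0,0)$ and $(1,0)$, and then performs a case analysis on the position of the next vertex $v_3$ (and, in a residual subcase, of $v_4$), each time exhibiting explicitly three collinear lattice points. Your parity/pigeonhole approach bypasses this case-by-case work entirely: the single observation that $|M/2M|=4<5$ forces two points of equal parity, and convexity plus the midpoint construction finish at once. The gain is brevity and conceptual clarity, and your argument also makes transparent why the constant $5$ is sharp (the unit square realizes all four parity classes). The paper's approach, by contrast, is more hands-on and yields explicit coordinates for the collinear triple, which is of no particular use here but is closer in spirit to the explicit classification carried out immediately afterwards in Proposition~\ref{pro:pol}.
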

\begin{proof}
We can reduce to the case $|\Delta\cap M| = 5$.
In this case, removing one of the vertices and
taking the convex hull of the remaining $4$ lattice
points we obtain a lattice polygon $\Delta'$ with
$4$ lattice points and we claim that it is equivalent
to one of the following.
\vspace{3mm}

\begin{center}
 \begin{tikzpicture}[scale=.6]
 \begin{scope}

 \tkzDefPoint(0,0){P}
 \tkzDefPoint(1,0){P1}
 \tkzDefPoint(1,1){P2}
 \tkzDefPoint(0,1){P3}

 \tkzFillPolygon[color = black!0](P,P1,P2,P3)
 \tkzDrawSegments[color=black](P,P1 P1,P2 P2,P3 P3,P)

\foreach \y in {0,1}
 { \fill[black] (0,\y) circle (3pt); }

\foreach \y in {0,1}
 { \fill[black] (1,\y) circle (3pt); }

 \node[black] at (.5,-1) 
 {\footnotesize (a)};

 \end{scope}
 
 \begin{scope}[xshift=6cm]

 \tkzDefPoint(0,1){P}
 \tkzDefPoint(1,1){P1}
 \tkzDefPoint(-1,0){P2}
 \tkzDefPoint(0,2){P3}

 \tkzFillPolygon[color = black!0](P1,P2,P3)
 \tkzDrawSegments[color=black](P1,P2 P2,P3 P3,P1)

\foreach \y in {1,2}
 { \fill[black] (0,\y) circle (3pt); }
 \fill[black] (1,1) circle (3pt); 
 \fill[black] (-1,0) circle (3pt); 
 
  \node[black] at (0,-1) 
 {\footnotesize (b)};

 \end{scope}

 \begin{scope}[xshift=10cm]

  \tkzDefPoint(0,0){P}
 \tkzDefPoint(1,0){P1}
 \tkzDefPoint(2,0){P2}
 \tkzDefPoint(0,1){P3}

 \tkzFillPolygon[color = black!0](P,P2,P3)
 \tkzDrawSegments[color=black](P,P2 P2,P3 P3,P)

\foreach \x in {0,1,2}
 { \fill[black] (\x,0) circle (3pt); }

\fill[black] (0,1) circle (3pt);
  \node[black] at (1,-1) 
 {\footnotesize (c)};

\end{scope}
 \end{tikzpicture}
\end{center}
Indeed, if $\Delta'$ is a quadrilateral then by
Pick's Theorem its area is $1$, so that it is equivalent
to the unitary square. If $\Delta'$ is a triangle with
one internal lattice point, joining this point
with any pair of vertices we obtain a triangle 
of area $1/2$. Then $\Delta'$ is equivalent to 
the triangle (b). Finally, if $\Delta'$ is a triangle
without internal lattice points, its area is $1$.
Acting with $\GL(2,\zz)$ we can suppose that
the edge having one lattice point in its relative 
interior lies on the $x$-axis, which means that the 
height of the triangle is $1$, so that it is
equivalent to (c). This proves the claim.

We conclude by observing that 
if we add one lattice point to any
of the polygons above and we take the
convex hull, we obtain at least three
lattice points on a line.
\end{proof}

\subsection{Weighted projective spaces}
In this last subsection we are going to prove
Proposition~\ref{propo:nonmds}. From now on
we restrict our study to weighted projective 
spaces $\pp(a_1,\dots,a_{k+1})$,
where $w=[a_1,\dots,a_{k+1}] \in (\zz_{>0})^{k+1}$
is the vector of weights. 
Let us denote by $A$ a generator 
for the divisor class group of $X$ (which is
torsion-free of rank one). In what follows we recall
how to construct a (non unique) lattice simplex 
$\Delta \subseteq M\otimes_{\zz}\qq$, such that
the toric pair $(X,H)$ associated to $\Delta$
consists of $X = \pp(a_1,\dots,a_{k+1})$ and 
$H := dA$, where $d:=\lcm(w)$.
Let us consider the following exact sequence
\[
 \xymatrix{
0 \ar[r] & M \ar[r]^{P^*} & \tilde M \ar[r]^{\cdot w} & \zz \ar[r] & 0,
}
\]
where $P^*$ is the kernel matrix of the product
by $w$, so that $M$ is included in $\tilde M\simeq \zz^{k+1}$ 
as the orthogonal to $w$. We can define the simplex
\[
 \tilde\Delta
 := 
 \{u\in \tilde M\otimes_\zz\qq
 \, :\,
 u\cdot w = \lcm(w)\, {\rm and }\, u_i\geq 0\},
\]
i.e. the intersection of the 
non negative ortant of $\tilde M$ with the
affine hyperplane defined by $u\cdot w = d$.
Observe that if we denote by $e_1,\dots,e_{n+1}$ 
the canonical basis of $\tilde M$,
then the vertices of $\tilde\Delta$ are $\delta_ie_i$,
where $\delta_i := d/a_i$. If we now translate 
$\tilde\Delta$ moving one of its vertices (for instance 
the first one) to the origin
and we pull it back to $M$ via $P^*$, we obtain
the simplex
\[
 \Delta := (P^*)^{-1}(\text{chull}(\delta_ie_i-\delta_1e_1\, :\, i\in\{1,\dots,n+1\})).
\]
We can now prove the following sufficient
condition for the nefness of the divisor $\pi^*dA-\lw(\Delta)E$
on the blowing up of $\pp(w)$ at the point $\mathbf 1$.
\begin{proposition}
\label{nef:p}
Let $v\in N$ be a width direction
such that the corresponding one 
parameter subgroup is the
intersection of hypersurfaces of degree 
smaller than $d/\lw(\Delta)$.
Then $\pi^*dA-\lw(\Delta)E$ is nef
or equivalently
$\Nef(\tilde X) = \PNef(\tilde X)$.
\end{proposition}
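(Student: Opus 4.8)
The plan is to reduce the statement to the nefness of the single divisor $D := d(\Delta)\pi^*A-\lw(\Delta)E = \pi^*H-\lw(\Delta)E$, and then to establish this nefness through a multiplicity estimate at $\mathbf 1$. For the reduction, I would use that $\tilde X$ has Picard number two, so both $\Nef(\tilde X)$ and $\PNef(\tilde X)$ are cones in a plane. By Proposition~\ref{prop:psnef} the cone $\PNef(\tilde X)$ is spanned by the two rays $\mathbb R_{\geq 0}\,\pi^*H$ (the case $m=0$) and $\mathbb R_{\geq 0}\,D$ (the case $m=\lw(\Delta)$). Moreover $\Nef(\tilde X)\subseteq\PNef(\tilde X)$ always holds, since by Definition~\ref{def:psnef} the cone $\PNef(\tilde X)$ is dual to a subcone of the Mori cone of $\tilde X$. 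As $\pi^*H$ is nef, once $D$ is known to be nef the convex cone $\Nef(\tilde X)$ contains both generators of $\PNef(\tilde X)$, whence $\PNef(\tilde X)\subseteq\Nef(\tilde X)$ and the two cones coincide. Thus everything comes down to proving that $D$ is nef.

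To verify nefness I would test $D$ against every irreducible curve $\gamma\subseteq\tilde X$. If $\gamma\subseteq E$ then $\gamma$ is a multiple of $e$ and $D\cdot e=\lw(\Delta)\geq 0$. Otherwise $\gamma=\tilde C$ is the strict transform of an irreducible curve $C\subseteq X$; writing $\mu:=\mathrm{mult}_{\mathbf 1}(C)$ and $\delta:=A\cdot C$, and using that $\mathbf 1$ lies in the open torus orbit (hence is a smooth point of $X$, so that $E\cdot\tilde C=\mu$), one has
\[
 D\cdot\tilde C = H\cdot C-\lw(\Delta)\,\mu = d(\Delta)\,\delta-\lw(\Delta)\,\mu.
\]
When $\mathbf 1\notin C$ this equals $d(\Delta)\,\delta\geq 0$, so the only case that requires work is $\mathbf 1\in C$, where I must establish $d(\Delta)\,\delta\geq \lw(\Delta)\,\mu$.

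This is where the hypothesis enters. Let $C_v$ be the closure of the one-parameter subgroup attached to $v$, so that $\deg_H C_v=\lw(\Delta)$ by Remark~\ref{rem:deg} and $\mathrm{mult}_{\mathbf 1}(C_v)=1$, giving $D\cdot\tilde C_v=0$. By assumption $C_v=\bigcap_j F_j$ with each $F_j\in|\ell_j A|$ and $\ell_j<d(\Delta)/\lw(\Delta)$. If $C\neq C_v$ is an irreducible curve through $\mathbf 1$, then $C\not\subseteq F_j$ for some $j$, and since $\mathbf 1\in C_v\subseteq F_j$ we have $\mathrm{mult}_{\mathbf 1}(F_j)\geq 1$. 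Because $\mathbf 1$ is a smooth point, the local intersection multiplicity of $F_j$ and $C$ at $\mathbf 1$ is at least $\mathrm{mult}_{\mathbf 1}(F_j)\cdot\mathrm{mult}_{\mathbf 1}(C)\geq\mu$, and since all local contributions are nonnegative this yields
\[
 \ell_j\,\delta = F_j\cdot C \geq \mu.
\]
Combined with $\ell_j<d(\Delta)/\lw(\Delta)$ this gives $\delta\geq \mu/\ell_j>\mu\,\lw(\Delta)/d(\Delta)$, that is $d(\Delta)\,\delta>\lw(\Delta)\,\mu$, so $D\cdot\tilde C>0$. Hence $D$ is nef.

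The main obstacle I anticipate is the intersection-theoretic step on the singular, merely $\mathbb Q$-factorial weighted projective space: one must justify both the identity $F_j\cdot C=\ell_j(A\cdot C)$ for the non-Cartier generator $A$ and the local multiplicity bound. Both are rescued by the fact that $\mathbf 1$ lies in the smooth open torus, so $F_j$ is Cartier near $\mathbf 1$, the local intersection number at $\mathbf 1$ is a genuine nonnegative integer bounded below by the product of the two multiplicities, and the global $\mathbb Q$-intersection number $\ell_j\delta$ dominates this single local contribution. A secondary point to check is the degenerate case $C\subseteq F_j$ for every $j$, which forces $C\subseteq C_v$ and hence $C=C_v$, already treated above with $D\cdot\tilde C_v=0$.
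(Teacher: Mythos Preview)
Your argument is correct and follows essentially the same approach as the paper: both use the small-degree hypersurfaces cutting out $C_v$ to show that any irreducible curve other than $\tilde C_v$ intersects $\tilde D$ nonnegatively. The only difference is packaging---the paper phrases the same computation on $\tilde X$ as a proof that $\tilde C_v$ spans an extremal ray of the Mori cone (deriving $\tilde\Gamma\cdot\tilde D_j<0$ from $\tilde\Gamma\cdot\tilde D<0$ and concluding $\tilde\Gamma\subseteq\bigcap_j\tilde D_j=\tilde C_v$), whereas you work on $X$ and obtain the inequality $\ell_j\delta\geq\mu$ directly from the local intersection multiplicity at the smooth point $\mathbf 1$; the underlying arithmetic $\alpha_j\delta-\mu\geq 0\Leftrightarrow \ell_j\delta\geq\mu$ is identical.
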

\begin{proof}
For simplicity of notation let us set 
$\tilde D = \pi^*dA-\lw(\Delta)E$,
and let us consider the curve 
$C_v\subseteq X$ (see Notation~\ref{def:param}).
By Remark~\ref{rem:deg}
we have that $\tilde{C}_v\cdot \tilde D = 0$,
where $\tilde{C}_v$ is the strict transform 
of $C_v$.
Moreover, since the divisor class group of 
$\tilde X$ has rank two, the nef cone and 
its dual, the Mori cone, are two dimensional,
so that in order to prove that $\tilde D$ is nef
it is enough to show that $\tilde{C}_v$ generates 
an extremal ray of the Mori cone. 

Let $n$ be a positive integer
such that $n\tilde{C}_v\equiv \tilde C_1 + \tilde C_2$,
with $\tilde C_1$ and $\tilde C_2$ 
effective curves of $\tilde X$.
Let $C_v = D_1\cap\dots\cap D_{r}$ and
let $\tilde D_j\equiv\alpha_j\pi^* A-E$ be 
the strict transform of $D_j$. 
By hypothesis $\alpha_j < d/\lw(\Delta)$
for any $1\leq j \leq r$.
We claim that $\tilde C_i\cdot \tilde D\geq 0$, for 
$i =1$ and $2$. Let us suppose by contradiction that
$\tilde C_1\cdot \tilde D < 0$.
Then at least one irreducible component 
$\tilde \Gamma$ of $\tilde C_1$ would intersect
negatively $\tilde D$ and thus
\[
 \tilde \Gamma\cdot \tilde D_j 
 =
 \tilde\Gamma\cdot(\alpha_j\pi^*A-E)
 <
 \tilde\Gamma\cdot(d/\lw(\Delta)\pi^*A-E)
 = 
 \frac{1}{\lw(\Delta)}\tilde \Gamma\cdot \tilde D < 0
\]
for any $j= 1,\dots,r$.
In particular $\tilde \Gamma$ would be 
contained in the intersection of all the 
$\tilde D_j$, so that $\tilde \Gamma
= \tilde C_v$, because $\tilde C_v$ is
irreducible. But this contradicts the equality
$\tilde C_v\cdot \tilde D = 0$ and proves the claim.
Using the equalities 
\[
 0 
 = n\tilde C_v\cdot \tilde D
 = (\tilde C_1 + \tilde C_2)\cdot \tilde D
\]
and the claim, we conclude that 
$\tilde C_i\cdot \tilde D = 0$
for each $i=1,2$, which implies that
the classes of $\tilde C_1$ and $\tilde C_2$ 
are proportional to that of $\tilde C_v$.
Therefore $\tilde D = \pi^*dA-\lw(\Delta)E$ 
is nef. Moreover, since the Picard group of 
$\tilde X$ has rank $2$, by Proposition~\ref{prop:psnef} 
we conclude that $\Nef(\tilde X) =\PNef(\tilde X)$. 
\end{proof}

\begin{proof}[Proof of Proposition~\ref{propo:nonmds}]
The proof is a case by case analysis
performed with the Computer Algebra
package Magma~\cite{mag} and
we explain it in the next lines.
Given any $w = [a_1,\dots,a_4]$ appearing in the
list, we construct $\Delta$ as explained at the beginning
of the section. Let $m := \lw(\Delta)$ be the
lattice width of $\Delta$ and let $v\in N$
be a width direction for $\Delta$.
In each case we verify that the hypotheses of 
Corollary~\ref{cor:notsem} 
are satisfied for the pair
$(\Delta, v)$, so that $\pi^*dA-mE$ is not 
semiample.

In order to conclude the proof
it is enough to prove that $\pi^*dA-mE$
is nef by means of 
Proposition~\ref{nef:p}. 
Consider the kernel $I$ of the map
\begin{equation}
 \label{eq:I}
 \mathbb C[x_1,\dots,x_4]\to\mathbb C[t],
 \quad x_i\mapsto t^{a_i},\text{ for $i=1\dots,4$},
\end{equation}
which is an ideal generated by binomials
and it is called {\em lattice ideal}, 
according to~\cite{hkl}*{\S 5}.
Let $(f_1,\dots,f_r)$ be a minimal basis
for $I$, ordered by increasing $w$-degree,
and let $d_i$ be the $w$-degree of $f_j$, 
for any $j = 1,\dots, r$.
We verified that in all but 
one case the following holds
\[
 \deg V(f_1,f_2)
  = m,
\]
where the degree is calculated with 
respect to the very ample class $dA$,
i.e. it is given by $d_1A\cdot d_2A\cdot dA 
= d_1d_2d/(a_1a_2a_3a_4)$.
This immediately implies that the
complete intersection $V(f_1,f_2)$
is irreducible, since otherwise one of its
prime components 
would be a one-parameter subgroup
of degree smaller than the width
$m$ of $\Delta$, a contradiction. 
In all these cases one verifies that 
$d_i < d/\lw(\Delta)
= {\rm lcm}(w)/m$, for $i=1,2$, so that 
$\pi^*dA-mE$ is nef by 
Proposition~\ref{nef:p}.
Finally in the remaining case,
namely $[23,27,29,30]$,
the curve $V(f_1,f_2,f_3)$ 
is irreducible, where 
$f_1 = x_1x_3^2 - x_2^3,\,
f_2 = x_1x_4^2 - x_2^2x_3$
and $f_3 = x_2x_4^2 - x_3^3$,
so that $d_1 = 81,\, d_2 = 83$ and 
$d_3 = 87$.
Also in this case we have that
$d_i < d/\lw(\Delta)
= {\rm lcm}(w)/m = 690/7$, for any 
$i=1,2,3$, and we conclude
again by means of Proposition~\ref{nef:p}.
\end{proof}

\begin{remark}
 \label{rem:can}
 Even if in all the cases studied in the preceding proof
 the width of $\Delta$ is realised by the binomials of lowest degree
 of the lattice ideal $I$, in general this is not true.
 For example let us consider the vector of weights $w = [4,5,6,7]$. 
 In this case $d := \lcm(w) = 420$ and
 the inclusion of $M$ in $\tilde M$ is given by
 \[
 P^* =
 \begin{pmatrix}
 1 & 0 & 4 & -4\\
 0 & 1 & 5 & -5\\
 0 & 0 & 7 & -6
 \end{pmatrix}.
 \]
 Therefore the vertices of $\Delta$ are $(0,0,0),\,(-105,84,0),\,
 (-105,0,70)$ and $(-105,0,60)$, and $\lw(\Delta) = 
 \lw_v(\Delta) = 42$, where $v = (2,2,3)$.
  A minimal basis for the lattice ideal is 
  $(f_1,\dots,f_6) = (x_1x_3-x_2^2,\, 
  x_1x_4 - x_2x_3,\, x_2x_4-x_3^2,\ x_1^3-x_3^2,\,
  x_1^2x_2-x_3x_4,\, x_1x_2^2 - x_4^2)$.
  The dual of the inclusion $M\to \tilde M$ is 
  the map $\tilde N \to N$ given by the transpose of $P^*$.
  A preimage of $v$ in $\tilde N$ is the vector 
  $\tilde v = (2,2,3,3)$, defining a one parameter
  subgroup whose closure is a prime component of
  $V(f_2,f_4)$, but not of $V(f_1,f_2)$.  
\end{remark}

\begin{remark}
\label{rem:ex}
When $k=2$, the second condition of
Corollary~\ref{cor:notsem} states
that there is only one lattice point on the 
line at lattice distance $1$ from one of the
two vertices $p_{min}$ and $p_{max}$
of the triangle.
Therefore Corollary~\ref{cor:notsem} 
turns out to be equivalent 
to~\cite{GKK}*{Theorem~1.5},
in case $n=1$ (with the notation of~\cite{GKK}), 
so that we could not find any new
example in the class of weighted projective 
planes. 
Anyway, our technique is different from
the one used by the authors of the cited
paper. Their generalisation
to dimension $3$ (see~\cite{GKK1}*{Theorem~2.11})
gives rise to the list appearing 
in~\cite{GKK1}*{Table~1}, where there is only one example
with $a_i\leq 30$ for any $i$, namely $\pp(17,18,20,27)$.

\end{remark}

\begin{bibdiv}
\begin{biblist}

\bib{AH}{article}{
   author={Alexander, J.},
   author={Hirschowitz, A.},
   title={Polynomial interpolation in several variables},
   journal={J. Algebraic Geom.},
   volume={4},
   date={1995},
   number={2},
   pages={201--222},
   issn={1056-3911},
   review={\MR{1311347}},
}

\bib{AKW}{article}{
   author={Averkov, Gennadiy},
   author={Kr\"{u}mpelmann, Jan},
   author={Weltge, Stefan},
   title={Notions of maximality for integral lattice-free polyhedra: the
   case of dimension three},
   journal={Math. Oper. Res.},
   volume={42},
   date={2017},
   number={4},
   pages={1035--1062},
   issn={0364-765X},
   review={\MR{3722425}},
}

\bib{AWW}{article}{
   author={Averkov, Gennadiy},
   author={Wagner, Christian},
   author={Weismantel, Robert},
   title={Maximal lattice-free polyhedra: finiteness and an explicit
   description in dimension three},
   journal={Math. Oper. Res.},
   volume={36},
   date={2011},
   number={4},
   pages={721--742},
   issn={0364-765X},
   review={\MR{2855866}},
   doi={10.1287/moor.1110.0510},
}

\bib{mag}{article}{
    AUTHOR = {Bosma, Wieb and Cannon, John and Playoust, Catherine},
     TITLE = {The {M}agma algebra system. {I}. {T}he user language},
      NOTE = {Computational algebra and number theory (London, 1993)},
   JOURNAL = {J. Symbolic Comput.},
  FJOURNAL = {Journal of Symbolic Computation},
    VOLUME = {24},
      YEAR = {1997},
    NUMBER = {3-4},
     PAGES = {235--265},
      ISSN = {0747-7171},
   MRCLASS = {68Q40},
       URL = {http://dx.doi.org/10.1006/jsco.1996.0125},
}

\bib{Ca}{article}{
   author={Castravet, Ana-Maria},
   title={Mori dream spaces and blow-ups},
   conference={
      title={Algebraic geometry: Salt Lake City 2015},
   },
   book={
      series={Proc. Sympos. Pure Math.},
      volume={97},
      publisher={Amer. Math. Soc., Providence, RI},
   },
   date={2018},
   pages={143--167},
   review={\MR{3821148}},
}

\bib{CGG}{article}{
   author={Catalisano, M. V.},
   author={Geramita, A. V.},
   author={Gimigliano, A.},
   title={Higher secant varieties of Segre-Veronese varieties},
   conference={
      title={Projective varieties with unexpected properties},
   },
   book={
      publisher={Walter de Gruyter, Berlin},
   },
   date={2005},
   pages={81--107},
   review={\MR{2202248}},
}

\bib{CC}{article}{
   author={Chiantini, L.},
   author={Ciliberto, C.},
   title={Weakly defective varieties},
   journal={Trans. Amer. Math. Soc.},
   volume={354},
   date={2002},
   number={1},
   pages={151--178},
   issn={0002-9947},
   review={\MR{1859030}},
}

\bib{CLS}{book}{
   author={Cox, David A.},
   author={Little, John B.},
   author={Schenck, Henry K.},
   title={Toric varieties},
   series={Graduate Studies in Mathematics},
   volume={124},
   publisher={American Mathematical Society, Providence, RI},
   date={2011},
   pages={xxiv+841},
   isbn={978-0-8218-4819-7},
   review={\MR{2810322}},
}

\bib{DP}{article}{
   author={Dickenstein, Alicia},
   author={Piene, Ragni},
   title={Higher order selfdual toric varieties},
   journal={Ann. Mat. Pura Appl. (4)},
   volume={196},
   date={2017},
   number={5},
   pages={1759--1777},
   issn={0373-3114},
   review={\MR{3694743}},
}

\bib{Fu}{book}{
   author={Fulton, William},
   title={Introduction to toric varieties},
   series={Annals of Mathematics Studies},
   volume={131},
   note={The William H. Roever Lectures in Geometry},
   publisher={Princeton University Press, Princeton, NJ},
   date={1993},
   pages={xii+157},
   isbn={0-691-00049-2},
   review={\MR{1234037}},
 }
 
 \bib{fu2}{book}{
   author={Fulton, William},
   title={Intersection theory},
   series={Ergebnisse der Mathematik und ihrer Grenzgebiete. 3. Folge. A
   Series of Modern Surveys in Mathematics [Results in Mathematics and
   Related Areas. 3rd Series. A Series of Modern Surveys in Mathematics]},
   volume={2},
   edition={2},
   publisher={Springer-Verlag, Berlin},
   date={1998},
   pages={xiv+470},
   isbn={3-540-62046-X},
   isbn={0-387-98549-2},
   review={\MR{1644323}},
   doi={10.1007/978-1-4612-1700-8},
}

\bib{G}{book}{
   author={Gimigliano, Alessandro},
   title={On linear systems of plane curves},
   note={Thesis (Ph.D.)--Queen's University (Canada)},
   publisher={ProQuest LLC, Ann Arbor, MI},
   date={1987},
   pages={(no paging)},
   isbn={978-0315-38458-3},
   review={\MR{2635606}},
}

\bib{GKK}{article}{
    AUTHOR = {Gonz\'{a}lez, Jos\'{e} Luis},
    AUTHOR = {Karu, Kalle},
     TITLE = {Some non-finitely generated {C}ox rings},
   JOURNAL = {Compos. Math.},
    VOLUME = {152},
      YEAR = {2016},
    NUMBER = {5},
     PAGES = {984--996},
      ISSN = {0010-437X},
  }

\bib{GKK1}{article}{
    AUTHOR = {Gonz\'{a}lez, Jos\'{e} Luis},
    AUTHOR = {Karu, Kalle},
   title={Examples of non-finitely generated Cox rings},
   date={2017},
    JOURNAL = {arXiv:1708.09064},
    EPRINT =   {https://arxiv.org/pdf/1708.09064.pdf},
}

\bib{GH}{article}{
   author={Griffiths, Phillip},
   author={Harris, Joseph},
   title={Algebraic geometry and local differential geometry},
   note={},
   publisher={},
   date={1979},
   pages={355 -- 452},
 }

\bib{H}{article}{
   author={Harbourne, Brian},
   title={Free resolutions of fat point ideals on ${\mathbb P}^2$},
   journal={J. Pure Appl. Algebra},
   volume={125},
   date={1998},
   number={1-3},
   pages={213--234},
   issn={0022-4049},
   review={\MR{1600024}},
   doi={10.1016/S0022-4049(96)00126-0},
}

\bib{hkl}{article}{
   author={Hausen, J\"{u}rgen},
   author={Keicher, Simon},
   author={Laface, Antonio},
   title={Computing Cox rings},
   journal={Math. Comp.},
   volume={85},
   date={2016},
   number={297},
   pages={467--502},
}

\bib{He}{article}{
   author={He, Zhuang},
   title={Mori dream spaces and blow-ups of weighted
projective spaces},
 JOURNAL =  {arXiv:1803.11536},
  EPRINT =   {https://arxiv.org/pdf/1708.09064.pdf},
}		

\bib{Hi}{article}{
   author={Hirschowitz, Andr\'{e}},
   title={Une conjecture pour la cohomologie des diviseurs sur les surfaces
   rationnelles g\'{e}n\'{e}riques},
   language={French},
   journal={J. Reine Angew. Math.},
   volume={397},
   date={1989},
   pages={208--213},
   issn={0075-4102},
   review={\MR{993223}},
   doi={10.1515/crll.1989.397.208},
}

\bib{IR}{article}{
   author={Ionescu, Paltin},
   author={Russo, Francesco},
   title={Manifolds covered by lines and the Hartshorne conjecture for
   quadratic manifolds},
   journal={Amer. J. Math.},
   volume={135},
   date={2013},
   number={2},
   pages={349--360},
   issn={0002-9327},
   review={\MR{3038714}},
}

\bib{LU}{article}{
   author={Laface, Antonio},
   author={Ugaglia, Luca},
   title={On a class of special linear systems of $\mathbb P^3$},
   journal={Trans. Amer. Math. Soc.},
   volume={358},
   date={2006},
   number={12},
   pages={5485--5500},
   issn={0002-9947},
   review={\MR{2238923}},
}

\bib{LU1}{article}{
   author={Laface, Antonio},
   author={Ugaglia, Luca},
   title={Standard classes on the blow-up of $\mathbb P^n$ at points in very
   general position},
   journal={Comm. Algebra},
   volume={40},
   date={2012},
   number={6},
   pages={2115--2129},
   issn={0092-7872},
   review={\MR{2945702}},
 }

\bib{laz}{book}{
   author={Lazarsfeld, Robert},
   title={Positivity in algebraic geometry. I},
   series={Ergebnisse der Mathematik und ihrer Grenzgebiete. 3. Folge. A
   Series of Modern Surveys in Mathematics [Results in Mathematics and
   Related Areas. 3rd Series. A Series of Modern Surveys in Mathematics]},
   volume={48},
   note={Classical setting: line bundles and linear series},
   publisher={Springer-Verlag, Berlin},
   date={2004},
   pages={xviii+387},
   isbn={3-540-22533-1},
   review={\MR{2095471}},
   doi={10.1007/978-3-642-18808-4},
}
	
\bib{LS}{article}{
   author={Lubbes, Niels},
   author={Schicho, Josef},
   title={Lattice polygons and families of curves on rational surfaces},
   journal={J. Algebraic Combin.},
   volume={34},
   date={2011},
   number={2},
   pages={213--236},
   issn={0925-9899},
   review={\MR{2811147}},
   doi={10.1007/s10801-010-0268-y},
}

\bib{Pe}{article}{
   author={Perkinson, David},
   title={Inflections of toric varieties},
   note={Dedicated to William Fulton on the occasion of his 60th birthday},
   journal={Michigan Math. J.},
   volume={48},
   date={2000},
   pages={483--515},
   issn={0026-2285},
   review={\MR{1786502}},
}		

\bib{Ra}{article}{
   author={Rabinowitz, Stanley},
   title={A census of convex lattice polygons with at most one interior
   lattice point},
   journal={Ars Combin.},
   volume={28},
   date={1989},
   pages={83--96},
   issn={0381-7032},
   review={\MR{1039134}},
}

\bib{Se}{article}{
   author={Segre, Beniamino},
   title={Alcune questioni su insiemi finiti di punti in geometria
   algebrica. },
   language={Italian},
   conference={
      title={Atti Convegno Internaz. Geometria Algebrica},
      address={Torino},
      date={1961},
   },
   book={
      publisher={Rattero, Turin},
   },
   date={1962},
   pages={15--33},
   review={\MR{0146714}},
}

\bib{St}{book}{
   author={Sturmfels, Bernd},
   title={Gr\"obner bases and convex polytopes},
   series={University Lecture Series},
   volume={8},
   publisher={American Mathematical Society, Providence, RI},
   date={1996},
   pages={xii+162},
   isbn={0-8218-0487-1},
   review={\MR{1363949}},
}

\bib{To} {article}{
    AUTHOR = {Togliatti, Eugenio},
     TITLE = {Alcune osservazioni sulle superficie razionali che
              rappresentano equazioni di {L}aplace},
   JOURNAL = {Ann. Mat. Pura Appl. (4)},
    VOLUME = {25},
      YEAR = {1946},
     PAGES = {325--339},
      ISSN = {0003-4622},
}
		
\end{biblist}
\end{bibdiv}

\end{document}